%
%
%
%
%
%
\RequirePackage{fix-cm}
\documentclass[smallextended]{svjour3}       
\smartqed  
\usepackage{graphicx}
%
%
\usepackage{amsmath}
\usepackage{amsfonts}
\usepackage{amssymb}
\usepackage{mathtools}
\usepackage{tikz}
\usetikzlibrary{calc, math, arrows, arrows.meta}

\spnewtheorem{observation}{Observation}{\bf}{\it}
%
\newcommand{\ProblemA}{(MFNI)}
\newcommand{\Problem}{(BMFNI)}
\DeclareMathOperator{\val}{val}
\DeclareMathOperator{\VAL}{VAL}
%
%
\setlength\parindent{0pt}
\begin{document}

\title{On the bicriterion maximum flow network interdiction problem
}


\author{Luca E. Sch\"afer\(^*\)\thanks{\(^*\)Corresponding author} \and Stefan Ruzika \and Sven~O. Krumke \and Carlos M. Fonseca
}


\institute{L. E. Sch\"afer \and Stefan Ruzika \and Sven O. Krumke \at
              Department of Mathematics, Technische Universit\"at Kaiserslautern, 67663 Kaiserslautern, Germany\\
              \email{luca.schaefer@mathematik.uni-kl.de}           
           \and
           Carlos M. Fonseca \at
           CISUC, Department of Informatics Engineering, University of Coimbra, P-3030 290 Coimbra, Portugal 
}

\date{Received: date / Accepted: date}

\maketitle

\begin{abstract}
This article focuses on a biobjective extension of the maximum flow network interdiction problem, where each arc in the network is associated with two capacity values. Two maximum flows from a source to a sink are to be computed independently of each other with respect to the first and second capacity function, respectively, while an interdictor aims to minimize the value of both maximum flows by interdicting arcs. We show that this problem is intractable and that the decision problem, which asks whether or not a feasible interdiction strategy is efficient, is \(\mathcal{NP}\)-complete.
We propose a pseudopolynomial time algorithm in the case of two-terminal series-parallel graphs and positive integer-valued interdiction costs. We extend this algorithm to a fully polynomial-time approximation scheme for the case of unit interdiction costs by appropriately partitioning the objective space.
\keywords{Network Interdiction \and Dynamic Programming \and Multiobjective Optimization \and Series-parallel graphs}
\end{abstract}

\section{Introduction}\label{sec:intro}
The maximum flow problem is one of the best studied optimization problems in operations research with several applications, cf.~\cite{Ahuja,ford1956maximal}, where one aims to send as many units of flow from a source to a sink over a network while satisfying given arc capacities. 

In the maximum flow network interdiction problem \ProblemA, an addtional opposing force, called the interdictor, is introduced, who tries to reduce the maximum flow value in a network as much as possible by interdicting arcs. Thereby, each arc is associated with an interdiction cost and the interdictor is constrained by a given interdiction budget. Although \ProblemA\ has been investigated quite early, cf. \cite{wollmer1964removing,mcmasters1970optimal,ghare1971optimal,ratliff1975finding}, the probably most prominent work considering the complexity, different integer programming formulations as well as several variants of \ProblemA\ is revealed in \cite{wood1993deterministic}. 
While some of the above mentioned articles consider special cases of \ProblemA, where e.g. each arc requires exactly one unit of the interdictor's budget, the work presented in \cite{wood1993deterministic} covers \ProblemA\ in its most general setting. In \cite{altner2010maximum}, the authors investigate the integer linear programming formulation of \ProblemA\ presented in \cite{wood1993deterministic}, provide new valid inequalities and investigate the approximability of the problem.
Further, \ProblemA\ has received considerable interest regarding the modeling of real-world applications, see e.g., drug interdiction, cf. \cite{wood1993deterministic}, hospital infection control, cf. \cite{assimakopoulos1987network}, and protection of electrical power grids, cf. \cite{salmeron2009worst}, \cite{salmeron2014value}. Furthermore, interdiction problems are often modelled as bilevel mixed integer programs, see \cite{doi:10.1002/9780470400531.eorms0932}, \cite{sinha2017review} for an overview.

It has been shown that \ProblemA\ with unit interdiction costs, also called the \(k\) most vital link problem, cf. \cite{ratliff1975finding}, is strongly \(\mathcal{NP}\)-hard, cf. \cite{phillips1993network,wood1993deterministic}. Thus, \(\mathcal{NP}\)-hardness for \ProblemA\ in its general form follows. However, several authors provide specialized algorithms for different graph classes, see e.g. \cite{phillips1993network} for a pseudopolynomial time algorithm solving \ProblemA\ on planar networks 
and \cite{wollmer1964removing} for a polynomial time algorithm for the \(k\) most vital link problem on source-sink-planar networks.

Nevertheless, literature on approximation algorithms for \ProblemA\ is rather sparse, cf. \cite{chestnut2017hardness}. In \cite{phillips1993network}, the author presents a fully polynomial time approximation scheme for planar graphs, while in \cite{chestnut2017hardness} an approximation algorithm with an approximation ratio of \(2(n-1)\) for general graphs is developed, where \(n\) denotes the number of vertices in the network.
Further, in \cite{burch2003decomposition}, an algorithm is proposed that either returns a \((1,1+\frac{1}{\varepsilon})\)-approximation, i.e., a feasible solution that does not deviate from the optimal solution by more than a factor of \(1+\frac{1}{\varepsilon}\), or a \((1+\varepsilon,1)\)-pseudoapproximation, i.e., a solution that violates the interdiction budget by at most a factor of \(1+\varepsilon\), but with the same objective function value as the optimal solution.

In contrast, there are only a few articles dealing with multiobjective interdiction problems. In \cite{royset2007solving}, the authors consider a biobjective extension of \ProblemA, where an attacker aims at minimizing the maximum flow and the total interdiction costs. An evolutionary algorithm for a multiobjective variant of \ProblemA\ is developed in \cite{ramirez2010bi}, see also \cite{rocco2011assessing}.

\paragraph{Our contribution}
In this article, we consider a new biobjective extension of \ProblemA, called the biobjective maximum flow network interdiction problem \Problem, which to the best of our knowledge has not been considered in the literature so far. We associate each arc with two integer capacity values such that two maximum flows can independently be computed from a source to a sink. An opposing force, called the interdictor, aims at reducing those maximum flows simultaneously by interdicting arcs. Consequently, this leads to several incommensurable interdiction strategies.

The remainder of this article is outlined as follows. In Section~\ref{sec:preliminaries}, we formally introduce the problem setting. We address the complexity of \Problem\ in Section \ref{sec:complexity} by providing an instance with an exponential number of non-dominated points and showing that its hard to decide whether or not a given interdiction strategy is efficient. In Section~\ref{sec:solution}, we propose a pseudopolynomial time algorithm for \Problem\ on two-terminal series-parallel graphs. We extend this algorithm to a fully polynomial time approximation scheme for the same problem with unit interdiction costs and, thus, reduce the gap on approximation algorithms for interdiction problems. Section~\ref{sec:conclusion} summarizes the paper and proposes further directions of research.

Note that a preliminary version of this article has been published in \cite{preliminaryversion}.
In comparison to the preliminary version, all proofs to the respective theorems have been added. Further, the connection to the biobjective knapsack problem for a special version of \Problem\ has been elaborated. Additionally, in case of two-terminal series-parallel graphs and unit interdiction costs, we extended the dynamic programming algorithm to a fully polynomial time approximation scheme.

\section{Preliminaries and problem formulation}\label{sec:preliminaries}
Let \(G=(V,A)\) be a directed graph with vertex set \(V\) and arc set \(A\), where \(s, t\in V\)  with \(s\neq t\) denote the source and sink vertex in \(G\), respectively. Further, we set \(n\coloneqq|V|\) and \(m\coloneqq|A|\).
An \(s\)-\(t\)-flow is a function \(f\colon A \rightarrow \mathbb{R}_+\) assigning a flow value to each arc while satisfying flow conservation constraints \(\sum_{a \in A: a \in \delta^-(v)} f(a)-\sum_{a \in A: a \in \delta^+(v)} f(a)=0\) for all \(v \in V\backslash\{s,t\}\) with \(\delta^-(v)\) and \(\delta^+(v)\) denoting the set of incoming and outgoing arcs of \(v \in V\), respectively. We call \(f\) feasible, if \(f(a)\leq u(a)\) for all \(a \in A\) for some capacity function \(u\colon~A~\rightarrow~\mathbb{N}\). The value of \(f\) is equal to the excess at the sink vertex \(t\), i.e., \(\val(f) = \sum_{a \in A: a \in \delta^-(t)} f(a)-\sum_{a \in A: a \in \delta^+(t)} f(a)\). The maximum flow problem asks  for the maximum flow value over all feasible \(s\)-\(t\)-flows, denoted by \(\VAL(G,u)\), cf.~\cite{Ahuja}. 

As in \ProblemA, we introduce an interdictor who is constrained by an interdiction budget \(B\in\mathbb{N}\), while each arc \(a\in A\) is associated with an interdiction cost \(c(a)\in\mathbb{N}\). Consequently, the set of all feasible interdiction strategies, denoted by \(\Gamma\), can be expressed as follows:
\begin{equation*}
\Gamma \coloneqq \left\{\gamma = (\gamma_a)_{a \in A} \in \{0,1\}^m \mid \sum\limits_{a \in A} c(a)\cdot\gamma_a \leq B\right\},	
\end{equation*}
where \(\gamma_a\) equals zero or one, if arc \(a\) is interdicted or not, respectively.
Further, we assign two capacity values to each arc, i.e., \(u\coloneqq(u^1,u^2)\colon A \rightarrow \mathbb{N}^2\) with \(u^i\colon A\rightarrow \mathbb{N}, i=1,2\). By \(U^1\) and \(U^2\), we denote the maximum arc capacity with respect to \(u^1\) and \(u^2\), respectively, i.e., \(U^i\coloneqq\max\{u^i(a)\mid a\in A\}\) for \(i=1,2\). Further, \(U\) denotes the maximum of \(U^1\) and \(U^2\).  Thus, two feasible \(s\)-\(t\)-flows \(f^1\) and \(f^2\), denoted by \(f\coloneqq(f^1,f^2)\), can be computed in \(G\) with respect to \(u^1\) and \(u^2\), respectively. In what follows, each interdiction strategy \(\gamma \in \Gamma\) induces an interdicted graph \(G(\gamma)\coloneqq(V', A')\) with \(V'=V\) and \(A'=A\setminus A(\gamma)\), where \(A(\gamma)\coloneqq\{a\in A\mid \gamma_a=1\}\). Thus, for a given interdiction strategy \(\gamma\in\Gamma\), we denote by \(\VAL(G(\gamma),u)\coloneqq \left(\VAL(G(\gamma),u^1),\VAL(G(\gamma),u^2)\right)\) the vector of maximum flow values in the interdicted graph \(G(\gamma)\) with respect to \(u^1\) and \(u^2\), respectively. Note that those two maximum flows can be computed in polynomial time, cf. \cite{Ahuja}. Since every interdiction strategy leads to a vector of maximum flows, we use the following orders on \(\mathbb{N}^2\), which are commonly used in the field of multi-, or more specifically, in biobjective optimization, cf. \cite{ehrgott2005multicriteria}:
\begin{align*}
y^1 \leq y^2 & \Leftrightarrow y^1_k \leq y^2_k \text{ for } k = 1,2 \text{ and } y^1 \neq y^2,\\
y^1 \leqq y^2 & \Leftrightarrow y^1_k \leq y^2_k \text{ for } k = 1,2.
\end{align*}
Generally speaking, in biobjective optimization problems, one aims to find those feasible solutions that do not allow to improve the one objective function without deteriorating the other, which leads to the following definition.
\begin{definition}\label{def:dominance}
	A feasible interdiction strategy \(\gamma \in \Gamma\) is called \emph{efficient}, if there does not exist \(\gamma' \in \Gamma\) such that
	\begin{equation*}
	\VAL(G(\gamma'),u) \leq \VAL(G(\gamma),u).
	\end{equation*}
	In this case, we call \(\VAL(G(\gamma),u)\) a \emph{non-dominated} point. By \(\Gamma_E\) and \(Z_N\), we denote the set of efficient interdiction strategies and non-dominated points, respectively.
\end{definition}
Using Definition \ref{def:dominance}, we state \Problem\ as \(\min\limits_{\gamma\in\Gamma}\VAL(G(\gamma),u)\).

Further, we say a feasible interdiction strategy \(\gamma\in\Gamma\) \(\varepsilon\)-approximates another interdiction strategy \(\gamma'\in\Gamma\) for some \(\varepsilon>0\), if \[\VAL(G(\gamma),u)~\leqq~(1+\varepsilon)~\VAL(G(\gamma'),u).\] Additionally, we call \(\mathcal{A}\subseteq \Gamma\) an \(\varepsilon\)-approximation of the set of non-dominated points, if for every \(\gamma\in\Gamma\) there exists an    \(a\in\mathcal{A}\) that \(\varepsilon\)-approximates \(\gamma\). We call an algorithm a fully polynomial-time approximation scheme (FPTAS) for \Problem, if for any instance of \Problem\ and for any value \(\varepsilon\in\mathbb{Q}_+\), the algorithm returns an \(\varepsilon\)-approximation in time polynomial both in the size of the instance and in \(\frac{1}{\varepsilon}\).

In what follows, we focus on two-terminal series-parallel graphs, which are, due to \cite{eppstein1992parallel}, defined as follows.
\begin{definition}
	A directed graph \(G=(V,A)\) is called two-terminal series-parallel with source \(s\) and sink \(t\), if \(G\) can be constructed by a sequence of the following operations.
	\begin{itemize}
		\item[1)] Construct a primitive graph \(G'=(V',A')\) with \(V'=\{s,t\}\) and \(A'=\{(s,t)\}\).
		\item[2)] (Parallel Composition) Given two directed, series-parallel graphs \(G_1\) with source \(s_1\) and sink \(t_1\) and \(G_2\) with source \(s_2\) and sink \(t_2\), form a new graph \(G\) by identifying \(s=s_1=s_2\) and \(t=t_1=t_2\).
		\item[3)] (Series Composition) Given two directed, series-parallel graphs \(G_1\) with source \(s_1\) and sink \(t_1\) and \(G_2\) with source \(s_2\) and sink \(t_2\), form a new graph \(G\) by identifying \(s=s_1\), \(t_1=s_2\) and \(t_2=t\).
	\end{itemize}
\end{definition}
Two-terminal series-parallel graphs can be recognized in polynomial time along with the corresponding decomposition tree. We denote the decomposition tree of a two-terminal series-parallel graph \(G\) by \(T_G\). The size of \(T_G\) is linear in the size of \(G\), cf. \cite{valdes1982recognition}. Note that \(T_G\) specifies how \(G\) has been constructed by using the above mentioned rules. Thus, each vertex in \(T_G\) can be associated with a two-terminal series-parallel graph itself, see Figure \ref{fig:decomposition_tree}. Consequently, if we refer to a graph \(H\) in \(T_G\), we actually refer to the graph \(H\), which actually denotes a subgraph of \(G\), corresponding to a vertex in \(T_G\).
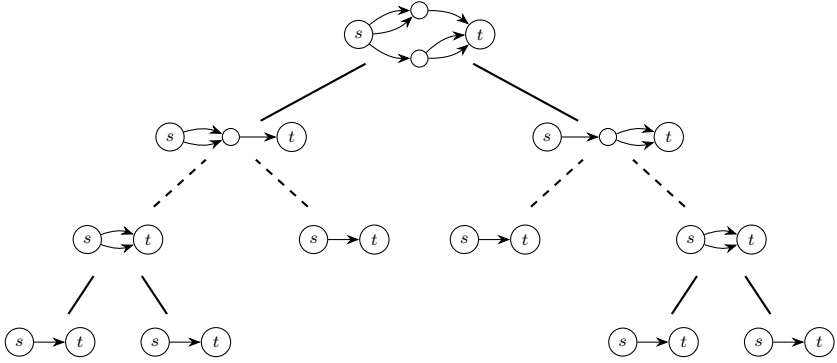
\begin{figure}[htbp]
	\begin{center}
		\begin{tikzpicture}[scale=0.8, transform shape]
		\tikzmath{
			\smallx = 1;
			\smally = 0.4;
			\basex = 3.1;
			\basey = 1.7;
			\factor = 0.6;
			\gap = 0.0;
			\bigBend = 40;
			\smallBend = 20;
			\rootx = 0;
			\rooty = 0;
			\rootlx = \rootx - \basex;
			\rootrx = \rootx + \basex;
			\leveloney = \rooty - \basey;
			\rootllx = \rootlx - \factor*\basex;
			\rootlrx = \rootlx + \factor*\basex;
			\rootrlx = \rootrx - \factor*\basex;
			\rootrrx = \rootrx + \factor*\basex;
			\leveltwoy = \rooty - 2*\basey;
			\rootlllx = \rootllx - \factor*\factor*\basex;
			\rootllrx = \rootllx + \factor*\factor*\basex;
			\rootrllx = \rootrlx - \factor*\factor*\basex;
			\rootrlrx = \rootrlx + \factor*\factor*\basex;
			\rootrrrx = \rootrrx + \factor*\factor*\basex;
			\rootrrlx = \rootrrx - \factor*\factor*\basex;
			\levelthreey = \rooty - 3*\basey;
			\rightshift = 0.0*\smallx;
			\radius = 0.55;
		}
		
		\draw[thick] (\rootx, \rooty) -- (\rootlx, \leveloney);
		\draw[thick] (\rootx, \rooty) -- (\rootrx, \leveloney);
		
		\draw[thick, dashed] (\rootlx, \leveloney) -- (\rootllx, \leveltwoy);
		\draw[thick, dashed] (\rootlx, \leveloney) -- (\rootlrx, \leveltwoy);
		
		\draw[thick, dashed] (\rootrx, \leveloney) -- (\rootrlx, \leveltwoy);
		\draw[thick, dashed] (\rootrx, \leveloney) -- (\rootrrx, \leveltwoy);
		
		\draw[thick] (\rootllx, \leveltwoy) -- (\rootlllx, \levelthreey);
		\draw[thick] (\rootllx, \leveltwoy) -- (\rootllrx, \levelthreey);
		
		\draw[thick] (\rootrrx, \leveltwoy) -- (\rootrrrx, \levelthreey);
		\draw[thick] (\rootrrx, \leveltwoy) -- (\rootrrlx, \levelthreey);

		\filldraw [white] (\rootx,\rooty) circle (1);
		\filldraw [white] (\rootlx,\leveloney) circle (\radius);
		\filldraw [white] (\rootrx,\leveloney) circle (\radius);
		\filldraw [white] (\rootllx,\leveltwoy) circle (1.3*\radius);
		\filldraw [white] (\rootlrx,\leveltwoy) circle (1.3*\radius);
		\filldraw [white] (\rootrlx,\leveltwoy) circle (1.3*\radius);
		\filldraw [white] (\rootrrx,\leveltwoy) circle (1.3*\radius);
		\filldraw [white] (\rootlllx,\levelthreey) circle (\radius);
		\filldraw [white] (\rootllrx,\levelthreey) circle (\radius);
		\filldraw [white] (\rootrrrx,\levelthreey) circle (\radius);
		\filldraw [white] (\rootrrlx,\levelthreey) circle (\radius);

		\begin{scope}[every node/.style={circle,draw, fill=white}]
		\node (S) at  ( \rootx-1*\smallx, 0) {$s$};
		\node (v1) at  ( \rootx, \smally) {};
		\node (v2) at  ( \rootx, -\smally) {};
		\node (T) at  ( \rootx+1*\smallx, 0) {$t$};
		\end{scope}
		\begin{scope}[>={Stealth[black]}, sloped]
		\path [->] (S) [bend left=\smallBend]edge node [below] {} (v1);
		\path [->] (S) [bend right=\smallBend]edge node [below] {} (v1);
		\path [->] (S) [bend right=\smallBend]edge node [below] {} (v2);
		\path [->] (v2) [bend right=\smallBend]edge node [below] {} (T);
		\path [->] (v2) [bend left=\smallBend]edge node [below] {} (T);
		\path [->] (v1) [bend left=\smallBend] edge node [above] {} (T);
		\end{scope}
		
		\begin{scope}[every node/.style={circle,draw, fill=white}]
		\node (S) at  (-1*\smallx+\rootlx, 0+\leveloney) {$s$};
		\node (v) at  ( 0*\smallx+\rootlx, 0+\leveloney) {};
		\node (T) at  ( 1*\smallx+\rootlx, 0+\leveloney) {$t$};
		\end{scope}
		\begin{scope}[>={Stealth[black]}, sloped]
		\path [->] (v) [bend left=0]edge node [below] {} (T);
		\path [->] (S) [bend left=\smallBend] edge node [above] {} (v);
		\path [->] (S) [bend right=\smallBend] edge node [above] {} (v);
		\end{scope}
		
		\begin{scope}[every node/.style={circle,draw, fill=white}]
		\node (S) at  (-1*\smallx+\rootrx, 0+\leveloney) {$s$};
		\node (v) at  ( 0*\smallx+\rootrx, 0+\leveloney) {};
		\node (T) at  ( 1*\smallx+\rootrx, 0+\leveloney) {$t$};
		\end{scope}
		\begin{scope}[>={Stealth[black]}, sloped]
		\path [->] (S) [bend left=0]edge node [below] {} (v);
		\path [->] (v) [bend left=\smallBend] edge node [above] {} (T);
		\path [->] (v) [bend right=\smallBend] edge node [above] {} (T);
		\end{scope}
		
		\begin{scope}[every node/.style={circle,draw, fill=white}]
		\node (S) at  (-0.5*\smallx+\rootllx, \leveltwoy) {$s$};
		\node (T) at  ( 0.5*\smallx+\rootllx, \leveltwoy) {$t$};
		\end{scope}
		\begin{scope}[>={Stealth[black]}, sloped]
		\path [->] (S) [bend right=\smallBend]edge node [below] {} (T);
		\path [->] (S) [bend left=\smallBend]edge node [below] {} (T);
		\end{scope}
		
		\begin{scope}[every node/.style={circle,draw, fill=white}]
		\node (S) at  (-0.5*\smallx+\rootlrx, \leveltwoy) {$s$};
		\node (T) at  ( 0.5*\smallx+\rootlrx, \leveltwoy) {$t$};
		\end{scope}
		\begin{scope}[>={Stealth[black]}, sloped]
		\path [->] (S) [bend right=0]edge node [below] {} (T);
		\end{scope}
		
		\begin{scope}[every node/.style={circle,draw, fill=white}]
		\node (S) at  (-0.5*\smallx+\rootrlx, 0+\leveltwoy) {$s$};
		\node (T) at  ( 0.5*\smallx+\rootrlx, 0+\leveltwoy) {$t$};
		\end{scope}
		\begin{scope}[>={Stealth[black]}, sloped]
		\path [->] (S) [bend right=0]edge node [below] {} (T);
		\end{scope}
		
		\begin{scope}[every node/.style={circle,draw, fill=white}]
		\node (S) at  (-0.5*\smallx+\rootrrx, 0+\leveltwoy) {$s$};
		\node (T) at  ( 0.5*\smallx+\rootrrx, 0+\leveltwoy) {$t$};
		\end{scope}
		\begin{scope}[>={Stealth[black]}, sloped]
		\path [->] (S) [bend right=\smallBend]edge node [below] {} (T);
		\path [->] (S) [bend left=\smallBend]edge node [below] {} (T);
		\end{scope}
		
		\begin{scope}[every node/.style={circle,draw, fill=white}]
		\node (S) at  (-0.5*\smallx+\rootlllx, \levelthreey) {$s$};
		\node (T) at  ( 0.5*\smallx+\rootlllx, \levelthreey) {$t$};
		\end{scope}
		\begin{scope}[>={Stealth[black]}, sloped]
		\path [->] (S) [bend left=0]edge node [below] {} (T);
		\end{scope}
		
		\begin{scope}[every node/.style={circle,draw, fill=white}]
		\node (S) at  (-0.5*\smallx+\rootllrx, \levelthreey) {$s$};
		\node (T) at  ( 0.5*\smallx+\rootllrx, \levelthreey) {$t$};
		\end{scope}
		\begin{scope}[>={Stealth[black]}, sloped]
		\path [->] (S) [bend right=0]edge node [below] {} (T);
		\end{scope}

		\begin{scope}[every node/.style={circle,draw, fill=white}]
		\node (S) at  (-0.5*\smallx+\rootrrlx, \levelthreey) {$s$};
		\node (T) at  ( 0.5*\smallx+\rootrrlx, \levelthreey) {$t$};
		\end{scope}
		\begin{scope}[>={Stealth[black]}, sloped]
		\path [->] (S) [bend right=0]edge node [below] {} (T);
		\end{scope}
		
		\begin{scope}[every node/.style={circle,draw, fill=white}]
		\node (S) at  (-0.5*\smallx+\rootrrrx, \levelthreey) {$s$};
		\node (T) at  ( 0.5*\smallx+\rootrrrx, \levelthreey) {$t$};
		\end{scope}
		\begin{scope}[>={Stealth[black]}, sloped]
		\path [->] (S) [bend right=0]edge node [below] {} (T);
		\end{scope}
		\end{tikzpicture}
	\end{center}
	\caption{Decomposition tree \(T_G\) of a two-terminal series-parallel graph \(G\). The root vertex corresponds to \(G\) itself. Every leaf corresponds to a primitive graph. Dashed lines correspond to series compositions, whereas straight lines correspond to parallel compositions.}
	\label{fig:decomposition_tree}
\end{figure}

If a graph \(G=(V,A)\) consists of only two vertices, i.e., \(V=\{s,t\}\), connected by \(m\) parallel arcs from \(s\) to \(t\), we call \(G\) a two-terminal parallel graph. Note that a two-terminal parallel graph is a special case of a two-terminal series-parallel graph.

In the following, an instance of \Problem\ is denoted by \((G,u,c,B)\), where \(G=(V,A)\) denotes a directed graph, \(u=(u^1,u^2)\) assigns two capacities to each arc, \(c\) associates every arc with an interdiction cost and \(B\) refers to the interdiction budget.

\section{Complexity Results}\label{sec:complexity}
In this section, we prove the following decision version of \Problem\ to be \(\mathcal{NP}\)-complete: Given an instance \((G,u,c,B)\) of (BMFNI) and a value \(K~= (K^1,K^2)^\top \in \mathbb{N}^2\), decide whether there exists an interdiction strategy \(\gamma \in \Gamma\) with \(\VAL(G(\gamma),u)\leq K\). Thus, we basically show that its hard to decide whether or not an interdiction strategy is efficient.
\begin{theorem}\label{thm:npc}
	The decision version of \Problem\ is \(\mathcal{NP}\)-complete, even for unit interdiction costs, i.e., \(c(a)=1\) for all \(a \in A\), and even on two-terminal parallel graphs.
\end{theorem}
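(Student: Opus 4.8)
The plan is to prove membership in \(\mathcal{NP}\) and then give a reduction from \textsc{Partition} to the decision version of \Problem.

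\emph{Membership.} A feasible interdiction strategy \(\gamma\in\Gamma\) itself is a polynomial-size certificate: one checks \(\sum_{a\in A}c(a)\gamma_a\le B\) in linear time, computes the two maximum flow values \(\VAL(G(\gamma),u^1)\) and \(\VAL(G(\gamma),u^2)\) in polynomial time (as noted in Section~\ref{sec:preliminaries}), and verifies \(\VAL(G(\gamma),u^i)\le K^i\) for \(i=1,2\). Hence the problem lies in \(\mathcal{NP}\).

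\emph{The reduction.} I would reduce from \textsc{Partition}: given positive integers \(a_1,\dots,a_n\) (we may assume \(n\ge 2\)) with \(\sum_{j=1}^n a_j=2T\), decide whether some \(I\subseteq\{1,\dots,n\}\) has \(\sum_{j\in I}a_j=T\). The starting point is that on a two-terminal parallel graph with arc set \(A\) the maximum flow with respect to a capacity function \(u^i\) is simply \(\sum_{a\in A}u^i(a)\), so interdicting \(S\subseteq A\) leaves maximum flow value \(\sum_{a\in A}u^i(a)-\sum_{a\in S}u^i(a)\). With unit costs, the decision version of \Problem\ on such a graph thus asks exactly: is there \(S\subseteq A\) with \(|S|\le B\) and \(\sum_{a\in S}u^i(a)\ge L^i\) for \(i=1,2\), where \(L^i:=\sum_{a\in A}u^i(a)-K^i\)? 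Given the \textsc{Partition} instance, set \(C:=2T+1\) and build a two-terminal parallel graph with \(2n\) parallel \(s\)-\(t\)-arcs: \(n\) "element arcs" \(e_1,\dots,e_n\) with \(u^1(e_j)=a_j,\ u^2(e_j)=C-a_j\), and \(n\) "dummy arcs" \(d_1,\dots,d_n\) with \(u^1(d_j)=0,\ u^2(d_j)=C\); put \(c\equiv 1\), \(B:=n\), and \(K:=(T,\,nC-T)\), so that \(L^1=T\) and \(L^2=nC-T\), and note \(K^1,K^2\ge 0\).

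\emph{Correctness.} If an interdiction strategy interdicts the element arcs indexed by \(I\) together with \(\ell\) dummy arcs, then \(\sum_{a\in S}u^1(a)=\sum_{j\in I}a_j\) and \(\sum_{a\in S}u^2(a)=(|I|+\ell)C-\sum_{j\in I}a_j\), with \(|I|+\ell\le n\). Feasibility of \(K\) becomes \(\sum_{j\in I}a_j\ge T\) and \(\sum_{j\in I}a_j\le(|I|+\ell-n)C+T\le T\); combining the two forces \(\sum_{j\in I}a_j=T\) and, since then the right-hand side of the second inequality must be at least \(T\), also \(|I|+\ell=n\). Conversely, from any \(I\) with \(\sum_{j\in I}a_j=T\) one obtains a feasible strategy by interdicting the arcs of \(I\) and any \(n-|I|\le n\) dummy arcs. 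Hence the constructed instance is a yes-instance of \Problem\ iff the \textsc{Partition} instance is, and the construction is plainly polynomial, which establishes \(\mathcal{NP}\)-hardness and therefore \(\mathcal{NP}\)-completeness.

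\emph{Main obstacle.} The delicate point is that both maximum-flow bounds \(\VAL(G(\gamma),u^i)\le K^i\) translate into \emph{lower} bounds on the interdicted capacity, i.e.\ they pull in the same direction, so one cannot directly encode the equality \(\sum_{j\in I}a_j=T\) that \textsc{Partition} needs; the resolution is to use the budget \(|S|\le B\) together with the dummy arcs so that once \(|S|\) is pinned to exactly \(n\), the second constraint flips into the required upper bound \(\sum_{j\in I}a_j\le T\). (Equivalently, one may reduce from the equal-cardinality variant of \textsc{Partition} and drop the dummy arcs; the use of zero capacities on dummy arcs is inessential and is easily avoided by a uniform shift of the element-arc \(u^1\)-values.) Note that only weak \(\mathcal{NP}\)-completeness is obtained — consistent with the pseudopolynomial algorithm of Section~\ref{sec:solution} on (series-)parallel graphs.
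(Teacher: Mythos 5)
Your proof is correct and follows essentially the same strategy as the paper's: a reduction from a weakly $\mathcal{NP}$-complete subset-selection problem to a two-terminal parallel graph with $n$ element arcs and $n$ complementary dummy arcs, unit costs, and budget $B=n$, where one capacity function is complemented (your $C-a_j$, the paper's $p_{\max}-p_i$) so that one of the two flow bounds flips into the needed inequality. The only difference is the source problem (\textsc{Partition} instead of binary \textsc{Knapsack}, which forces you to extract the second, opposing inequality from the budget via the dummy arcs rather than getting it directly from the knapsack weight constraint), and your closing remarks on membership, zero capacities, and weak $\mathcal{NP}$-hardness are all accurate.
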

\begin{proof}
	The problem is clearly contained in \(\mathcal{NP}\).
	To show \(\mathcal{NP}\)-completeness, we conduct a polynomial time reduction from the binary knapsack problem, which is known to be \(\mathcal{NP}\)-complete, cf. \cite{garey2002computers}.
	The decision version of the binary knapsack problem is as follows: Given a finite set \(I =\{1,\ldots,n\}\) of elements with profits \(p_i \in \mathbb{N}\) and weights \(w_i \in \mathbb{N}\) for all elements \(i \in I\) and positive integers \(P\) and \(W\), does there exist a solution \(I^*\subseteq I\) such that \(\sum_{i \in I^*} w_i \leq W\) and \(\sum_{i \in I^*} p_i \geq P\)?
	
	Given an instance of the binary knapsack problem, we construct an instance of \Problem. Let \(G=(V,A)\) with \(V = \{s,t\}\) and for each element \(i\in I\) introduce an arc \(a_i\) and a dummy arc \(b_i\) going from \(s\) to \(t\). Thus, \(A\coloneqq A_1\cup A_2\) with \(A_1=\{a_1,\ldots,a_n\}\) and \(A_2=\{b_1,\ldots,b_n\}\). Further, we set \(u(a_i)=(p_{\max}-p_i, w_i)\) and \(u(b_i) = (p_{\max}, 0)\) with \(p_{\max}\coloneqq \max_{i \in I}\{p_i\}\). We define \(B\coloneqq n\) and \(K=(K^1,K^2)\coloneqq (np_{\max}-P,W)\).
	
	Given a solution \(I^*\subseteq I\) of the binary knapsack instance, i.e., \(\sum_{i\in I^*}p_i \geq P\) and \(\sum_{i\in I^*}w_i\leq W\), we construct an interdiction strategy \(\gamma\coloneqq (\gamma_a,\gamma_b) \coloneqq (\gamma_{a_1},\ldots,\gamma_{a_n},\gamma_{b_1},\ldots,\gamma_{b_n})\) as follows:
	\begin{equation*}
	\gamma_{a_i} = \begin{cases}
	1, &\text{ if } i\notin I^*\\
	0, & \text{ else}
	\end{cases}\hspace{2cm}
	\gamma_{b_i} = \begin{cases}
	1, &\text{ if } i \in I^*\\
	0, & \text{ else.}
	\end{cases}
	\end{equation*}
	It follows:
	\begin{align*}
	\VAL(G(\gamma),u^1) &= \sum_{i\mid a_i\in A\setminus A(\gamma_a)} u^1(a_i) + \sum_{i\mid b_i\in A\setminus A(\gamma_b)} u^1(b_i)\\
	&= \sum_{i\mid a_i\in A\setminus A(\gamma_a)} p_{\max}-p_i + \sum_{i\mid b_i\in A\setminus A(\gamma_b)} p_{\max}\\
	&= np_{\max} - \sum_{i\mid a_i\in A\setminus A(\gamma_a)} p_i \\
	&\leq np_{\max} - P = K^1\\
	\VAL(G(\gamma),u^2) &= \sum_{i\mid a_i\in A\setminus A(\gamma_a)} u^2(a_i) + \sum_{i\mid b_i\in A\setminus A(\gamma_b)} u^2(b_i)\\ 
	&= \sum_{i\mid a_i\in A\setminus A(\gamma_a)} w_i + \sum_{i\mid b_i\in A\setminus A(\gamma_b)} 0
	\leq W = K^2.
	\end{align*}
	For the other direction let \(\gamma = (\gamma_a,\gamma_b)\in\Gamma\) be a feasible interdiction strategy. Without loss of generality, we can assume that \(\gamma\) either interdicts \(a_i\) or \(b_i\) for all \(i=1,\ldots,n\), since all \(b_i\) have the same capacity values.
	Thus, we set \(I^*\coloneqq\{i\mid \gamma_{a_i}\neq 1, i=1,\ldots,n\}\) and show that \(I^*\) is a solution to the knapsack problem:
	\begin{align*}
	&\sum_{i\in I^*} w_i = \sum_{i\in I^*} u^2(a_i) = \sum_{i\in I^*} u^2(a_i) + \sum_{i\notin I^*} u^2(b_i) = \VAL(G(\gamma),u^2) \leq K^2 = W\\
	&np_{\max}-\sum_{i\in I^*} p_i = \sum_{i\in I^*} p_{\max} - p_i + \sum_{i\notin I^*} p_{\max} = \sum_{i\in I^*} u^1(a_i) + \sum_{i\notin I^*} u^1(b_i)\\
	& = \VAL(G(\gamma),u^1) \leq K^1 = np_{\max} - P.
	\end{align*}
	Thus, it holds that \(\sum_{i\in I^*} w_i\leq W\) and \(\sum_{i\in I^*} p_i \geq P\), which concludes the proof.\qed
\end{proof}

Further, in multiobjective combinatorial optimization, one is usually interested in the worst-case size of the non-dominated set. To account for this question, we show that \Problem\ is intractable, i.e., there might be exponentially many non-dominated points with respect to the size of the problem instance. To prove intractability, consider the parametric cost knapsack problem, cf. \cite{burkard1995inverse}:
\begin{equation*}
f(q) = \max\left\{\sum_{i=1}^n \left(p^1_i q + p^2_i \right) x_i \mid \sum_{i=1}^n w_ix_i\leq b, x_i \in \{0,1\}, i = 1,\ldots,n\right\},
\end{equation*}
where \(p^1_i, p^2_i \in \mathbb{R}_+\) refer to the coefficients of the linear cost function \(f\) for item \(i\) and \(q\geq 0\). The weight of item \(i\) is denoted by \(w_i\) and \(b\) refers to the knapsack capacity.
It is known that the number of breakpoints, i.e., values of \(q\) where the slope of \(f(q)\) changes, is in general exponential in the number of variables.
\begin{theorem}[\cite{carstensen1983complexity}]\label{thm:knapintractable}
	For every \(n\in \mathbb{N}\), there exists a parametric cost knapsack problem with \(\frac{1}{2}\left(9n^2 - 7n\right)\) variables, such that \(f(q)\) has \(2^n-1\) breakpoints in the interval \((-2^n,2^n)\).\qed
\end{theorem}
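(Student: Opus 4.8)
The plan is to exhibit, for every $n$, an explicit parametric cost knapsack instance whose optimal value function $f(q)$ has $2^{n}-1$ breakpoints in $(-2^{n},2^{n})$ while using only $\tfrac12(9n^{2}-7n)$ items. Since $f$ is a pointwise maximum of finitely many affine functions $q\mapsto\langle c^{1},x\rangle q+\langle c^{2},x\rangle$, it is automatically piecewise linear and convex, so a ``breakpoint'' is simply a value of $q$ at which the optimal $x$ switches to one with a different $c^{1}$-value; it therefore suffices to force the optimal solution to step through $2^{n}$ successive configurations. The guiding idea is to make the optimal knapsack solution behave like an $n$-bit binary counter: as $q$ sweeps the window, the (uniquely) optimal solution should pass through the configurations coding $0,1,\dots,2^{n}-1$ in this order, and each of the $2^{n}-1$ increments is a breakpoint.

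I would build the instance recursively on the number of bits. Split the items into blocks $G_{1},\dots,G_{n}$, where $G_{j}$ governs bit $j$ and contains exactly $9j-8$ items; this is consistent with $\sum_{j=1}^{n}(9j-8)=\tfrac12(9n^{2}-7n)$ and points to a ``doubling gadget'' whose size is linear in the current number of bits. Inductively, suppose the knapsack on $G_{1}\cup\dots\cup G_{n-1}$ with capacity $b_{n-1}$ has value function $f_{n-1}$ exhibiting $2^{n-1}-1$ breakpoints in $(-2^{n-1},2^{n-1})$, each attained by a uniquely optimal solution on a nondegenerate interval. To pass to $n$ bits I would add the items of $G_{n}$ and enlarge the capacity to $b_{n}$, choosing the coefficients $p^{1}_{i},p^{2}_{i}$ and weights $w_{i}$ so that: for small $q$ the new items are unprofitable and occupy no scarce capacity, so $f_{n}$ agrees with a rescaled copy of $f_{n-1}$; at one critical value of $q$ the leading new item becomes profitable, and its inclusion tightens the capacity constraint in a way that forces the entire sub-solution on the lower blocks to be re-optimised from scratch, after which $f_{n}$ traces a second rescaled copy of $f_{n-1}$. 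This produces $(2^{n-1}-1)+1+(2^{n-1}-1)=2^{n}-1$ breakpoints, and a shift and rescaling of $q$ keeps them inside $(-2^{n},2^{n})$. The items of $G_{n}$ beyond the single ``switch'' item act as a carry chain propagating the re-optimisation through all lower blocks, which is why bit $j$ needs $\Theta(j)$ items rather than a constant number.

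The verification then splits into three checks, routine in principle: (i) within each block only the intended ``selections'' are weight-feasible, so the feasible set is exactly a blow-up of $\{0,1,\dots,2^{n}-1\}$ in binary coding; (ii) for each coded integer $k$ there is a nonempty open $q$-interval on which the associated solution $x_{k}$ is the \emph{unique} maximiser of $\langle c^{1},x\rangle q+\langle c^{2},x\rangle$ over the feasible set (whence consecutive pieces have distinct slopes), and these intervals occur in the order $k=0,1,\dots,2^{n}-1$; (iii) the resulting $2^{n}-1$ breakpoints are pairwise distinct and lie in $(-2^{n},2^{n})$. Together with the bookkeeping identity $g(n)=g(n-1)+9n-8$ for the item count, this yields the statement.

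The one genuinely delicate point is the arithmetic behind (ii)--(iii): one has to pin down the weights $w_{i}$, the coefficients $p^{1}_{i},p^{2}_{i}$ and the capacities $b_{j}$ on a common ``schedule of scales'' so that the carry mechanism fires in exactly the right order, so that no unintended subset of items is ever simultaneously optimal on a subinterval, and so that all breakpoints are compressed into the prescribed window while the data keep bit-length $O(n)$. Once such a schedule is fixed, convexity of $f$, the breakpoint count, and the induction are mechanical. An alternative route would be to invoke Carstensen's exponential lower bound for parametric shortest paths and reduce it to the parametric knapsack, but the direct binary-counter construction seems both shorter and easier to make self-contained.
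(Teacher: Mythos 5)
This statement is imported verbatim from Carstensen's 1983 paper; the article you are working from gives no proof of it at all (the \(\qed\) in the statement signals that it is cited, not proved), so there is no internal argument to compare against. Judged on its own terms, your proposal is a plausible high-level strategy but it is not a proof: the entire mathematical content of the theorem is the existence of an explicit instance, and you never exhibit one. Every load-bearing step is phrased as ``choosing the coefficients \(p^1_i, p^2_i\) and weights \(w_i\) so that \(\dots\)'' without showing that such a choice exists. The block sizes \(9j-8\) are reverse-engineered from the target count \(\sum_{j=1}^n (9j-8)=\tfrac12(9n^2-7n)\) rather than derived from an actual gadget, and the claim that a ``carry chain'' needs \(\Theta(j)\) items is asserted, not demonstrated. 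You yourself flag the ``genuinely delicate point'' --- pinning down the schedule of scales so that the counter steps through all \(2^n\) states in order, each uniquely optimal on a nondegenerate subinterval of \((-2^n,2^n)\) --- and that delicate point is precisely the theorem. A single binary item can also cause a cascade of re-optimisations that skips states or revisits slopes, in which case the breakpoint count collapses; ruling this out requires the concrete arithmetic you defer.

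Two smaller observations. First, your framing of \(f\) as a convex piecewise-linear upper envelope is correct, and the reduction of ``breakpoint counting'' to ``forcing \(2^n\) distinct slopes to appear in order'' is the right lens. Second, since the result is external, the honest options here are either to cite Carstensen (as the paper does) or to carry out the construction in full; a sketch that outsources the construction to an unverified induction does neither. If you want a self-contained route, working through Carstensen's actual instance (or the integral variants in the cited follow-up work of Giudici et al.\ and Holzhauser--Krumke) and verifying conditions (i)--(iii) for it would close the gap.
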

This holds even true, when restricting \(q\) to be on a compact, positive real-valued interval, cf. \cite{giudici2017approximation}, and even for integral input data, cf. \cite{holzhauser2017fptas}.

Further, the parametric cost knapsack problem can be interpreted as a weighted sum scalarization of the following biobjective knapsack problem:
\begin{subequations}
	\begin{align*}
	\label{eq:lp0}
	\max \quad & \left(\sum_{i=1}^n p^1_ix_i, \sum_{i=1}^{n} p^2_ix_i\right) \\
	\text{s.t.} \quad &  \sum_{i=1}^n w_ix_i\leq b\\
	& x_i \in \{0,1\}, & i = 1,\ldots,n
	\end{align*}
\end{subequations}
Thus, using Theorem \ref{thm:knapintractable}, we can conclude:
\begin{corollary}\label{cor:kpintrac}
	The biobjective knapsack problem is intractable, i.e., there might be exponentially many non-dominated points with respect to the size of the problem instance.\qed
\end{corollary}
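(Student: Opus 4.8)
The plan is to reduce Corollary \ref{cor:kpintrac} directly to Theorem \ref{thm:knapintractable}, exploiting the observation already recorded above that the parametric cost knapsack problem is exactly the weighted sum scalarization of the biobjective knapsack problem. Concretely, for a weight vector \(\lambda = (q,1)\) with \(q \geq 0\) the scalarized objective \(q\sum_{i} p^1_i x_i + \sum_{i} p^2_i x_i\) coincides with the parametric objective, so for each fixed \(q\) an optimal solution \(x(q)\) of the parametric problem is a supported efficient solution of the biobjective problem, and its image \(\bigl(\sum_i p^1_i x_i(q),\ \sum_i p^2_i x_i(q)\bigr)\) is a supported non-dominated point.

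First I would note that \(f(q)\), being the pointwise maximum over the finitely many feasible \(0/1\) vectors of the affine functions \(q \mapsto q\sum_i p^1_i x_i + \sum_i p^2_i x_i\), is piecewise linear and convex, and that on each of its maximal linear pieces a single feasible vector attains the maximum. Two adjacent linear pieces have different slopes, since that is precisely what a breakpoint is; hence the vectors realizing them have different values of \(\sum_i p^1_i x_i\), and together with the matching intercepts this forces the two objective vectors to be distinct. Thus the linear pieces of \(f\) correspond to distinct points of the objective space, each of which is an extreme supported non-dominated point of the biobjective knapsack instance. If \(f\) has \(B\) breakpoints on the interval, then it has \(B+1\) linear pieces, so the biobjective instance has at least \(B+1\) non-dominated points.

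It then remains to invoke Theorem \ref{thm:knapintractable}: for every \(n\) there is a parametric cost knapsack instance on \(N = \tfrac{1}{2}(9n^2 - 7n)\) variables with \(2^n - 1\) breakpoints, so the associated biobjective knapsack instance has at least \(2^n\) non-dominated points while using only \(N = \Theta(n^2)\) variables. Since, by the integral-data strengthening cited immediately after Theorem \ref{thm:knapintractable}, the numerical data of this family can be encoded in size polynomial in \(n\), the instance size is polynomial in \(n\), and \(2^n = 2^{\Omega(\sqrt{N})}\) is exponential in that size, which establishes intractability.

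The main obstacle I anticipate is the middle step: verifying that consecutive linear pieces of \(f\) really do yield two \emph{different} non-dominated points rather than two distinct optimal solutions sharing the same objective image. This is exactly where convexity of \(f\) and the fact that a breakpoint is a genuine change of slope are needed, since equal images would mean equal affine functions and hence no breakpoint there. One should also be mildly careful about orientation, as the biobjective knapsack is a maximization problem and ``non-dominated'' is understood in the corresponding sense; the argument is otherwise insensitive to this, and the extreme supported non-dominated points produced are pairwise incomparable precisely because they are distinct vertices of the upper envelope of the objective image.
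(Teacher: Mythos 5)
Your proposal is correct and follows essentially the same route as the paper, which derives Corollary~\ref{cor:kpintrac} directly from Theorem~\ref{thm:knapintractable} via the observation that the parametric cost knapsack problem is the weighted sum scalarization of the biobjective knapsack problem; the paper leaves the details implicit (the corollary is stated with an immediate \(\square\)), and your argument simply fills in the step that distinct slopes of adjacent linear pieces of \(f\) yield distinct supported non-dominated points. The only cosmetic nit is that a maximal linear piece may be attained by several feasible vectors sharing one objective vector, so "a single feasible vector" should read "a single objective vector," but this does not affect the argument.
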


\begin{theorem}\label{thm:kpequiv}
	Let \((G,u,c,B)\) be an instance of \Problem\ with \(G\) being a two-terminal parallel graph with \(m\) arcs and capacities \((u^1(a),u^2(a))\) for all \(a~\in A(G)\). Further, let \(X_E\) denote the set of efficient solutions of 
	\begin{equation*}
	\max\left\{v(x)\coloneqq\left(\sum_{i=1}^{m}u^1(a_i)x_i, \sum_{i=1}^{m}u^2(a_i)x_i\right)\mid \sum_{i=1}^{m}c(a_i)x_i \leq B, x\in \{0,1\}^m\right\}.
	\end{equation*}
	Then, it holds that \(X_E = \Gamma_E\), where \(\Gamma_E\) denotes the set of efficient interdiction strategies of the \Problem\ instance.
\end{theorem}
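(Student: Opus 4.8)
The plan is to use that on a two-terminal parallel graph the maximum flow is nothing but the sum of the capacities of the arcs that survive interdiction, so that \Problem\ on such a graph coincides, up to an additive constant and a reversal of the optimization direction, with the stated biobjective knapsack problem in which the selection variable of item \(i\) plays the role of the interdiction indicator of arc \(a_i\). Concretely, let \(G\) consist of the parallel arcs \(a_1,\dots,a_m\) from \(s\) to \(t\). First I would record the elementary fact that for every \(\gamma=(\gamma_{a_i})_{i=1}^m\in\Gamma\) and every \(k\in\{1,2\}\),
\[
\VAL(G(\gamma),u^k)=\sum_{i:\gamma_{a_i}=0}u^k(a_i),
\]
since the flow that sends \(u^k(a_i)\) units along each non-interdicted arc is feasible and attains the capacity of the cut \((\{s\},V\setminus\{s\})\) in \(G(\gamma)\), which upper-bounds every \(s\)-\(t\)-flow. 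I would then identify an interdiction strategy \(\gamma\in\{0,1\}^m\) with the knapsack vector \(x:=\gamma\); because the budget constraint \(\sum_{i=1}^m c(a_i)\gamma_{a_i}\le B\) is verbatim the knapsack constraint \(\sum_{i=1}^m c(a_i)x_i\le B\), the feasible set \(\Gamma\) of \Problem\ and the feasible set of the knapsack problem are literally the same set under this identification.

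Next, writing \(S_k:=\sum_{i=1}^m u^k(a_i)\), the observation above reads \(\VAL(G(\gamma),u^k)=S_k-v_k(x)\), where \(v_k(x)=\sum_{i=1}^m u^k(a_i)x_i\) denotes the \(k\)-th component of \(v(x)\). From this I would derive, for any \(\gamma,\gamma'\in\Gamma\) with associated \(x,x'\), the equivalence
\[
\VAL(G(\gamma'),u)\le\VAL(G(\gamma),u)\quad\Longleftrightarrow\quad v(x)\le v(x'),
\]
where \(\le\) on the left is the (minimization) dominance order of Definition~\ref{def:dominance} and \(\le\) on the right is the same order applied to the maximization objective \(v\): indeed \(S_k-v_k(x')\le S_k-v_k(x)\) is equivalent to \(v_k(x')\ge v_k(x)\) for \(k=1,2\), and since the offsets \(S_k\) cancel, \(\VAL(G(\gamma'),u)\neq\VAL(G(\gamma),u)\) is equivalent to \(v(x')\neq v(x)\). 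The only place that needs attention is exactly this strict part of the order: one has to make sure that the cancellation of the constants makes the two notions of ``being dominated'' correspond exactly, not merely one-sidedly.

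Finally I would conclude: by definition \(\gamma\in\Gamma_E\) if and only if there is no \(\gamma'\in\Gamma\) with \(\VAL(G(\gamma'),u)\le\VAL(G(\gamma),u)\); by the equivalence just established together with \(\Gamma=\{x\in\{0,1\}^m:\sum_{i=1}^m c(a_i)x_i\le B\}\), this holds if and only if there is no feasible \(x'\) with \(v(x)\le v(x')\), i.e.\ if and only if \(x\in X_E\). Since the identification \(\gamma\leftrightarrow x\) is the identity map on \(\{0,1\}^m\), this gives \(\Gamma_E=X_E\). The whole argument is elementary; the ``hard part'', insofar as there is one, is purely a matter of bookkeeping: keeping the min/max reversal and the additive offsets straight so that both the weak and the strict parts of the dominance relation transfer correctly.
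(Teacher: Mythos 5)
Your proof is correct and follows essentially the same route as the paper: both identify $\gamma$ with $x$ via the identity map on $\{0,1\}^m$ and exploit the affine relation $\VAL(G(\gamma),u)=v(e)-v(\gamma)$ (your $S_k-v_k(x)$) to transfer the dominance order between the minimization and maximization formulations. Your version is, if anything, slightly more careful than the paper's in spelling out why the strict part of the dominance relation is preserved under the cancellation of the constant offsets.
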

\begin{proof}
	\underline{\(\Gamma_E \subseteq X_E\):} Let \(\gamma\in \Gamma_E\) be an efficient interdiction strategy and assume \(\gamma\notin X_E\). Then, there exists \(x\in X_E\) such that \(v(x)\geq v(\gamma)\). Consequently, it follows that \(\VAL(G(x),u)=v(e)-v(x)\leq v(e) - v(\gamma) = \VAL(G(\gamma),u)\) with \(e=(1,\ldots,1)\), contradicting that \(\gamma\) is an effcicient interdiction strategy. Thus, \(\gamma\in X_E\).
	
	\underline{\(X_E \subseteq \Gamma_E\):} Let \(x \in X_E\) and assume \(x \notin \Gamma_E\). Then, there exists \(\gamma\in\Gamma\) such that \(\VAL(G(\gamma),u) = v(e)-v(\gamma)\leq v(e)-v(x) = \VAL(G(x),u)\), It follows that \(v(x)\leq v(\gamma)\), contradicting that \(x\in X_E\). Thus, \(x\in \Gamma_E\).\qed
\end{proof}

Note that the optimization problem in Theorem \ref{thm:kpequiv} is nothing but a biobjective knapsack problem, where the capacities and costs of \Problem\ denote the profits and weights of the biobjective knapsack problem, respectively. Thus, using Corollary \ref{cor:kpintrac} and Theorem \ref{thm:kpequiv}, we obtain the following result.
\begin{theorem}
	The problem \Problem\ is intractable even on two-terminal parallel graphs, i.e., the number of non-dominated points might be exponential in the size of the problem instance. In fact, even the set of supported non-dominated points might be exponential in the size of the problem instance.\qed
\end{theorem}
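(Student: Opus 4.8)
The plan is to derive both assertions from Carstensen's breakpoint bound (Theorem~\ref{thm:knapintractable}) through the solution-set identity of Theorem~\ref{thm:kpequiv}. First I would fix $n\in\mathbb{N}$ and invoke Theorem~\ref{thm:knapintractable}, in the integral-data and compact-positive-interval refinements noted right after it, to obtain a parametric cost knapsack instance with $m\coloneqq\tfrac12(9n^2-7n)$ items, integer coefficients $p^1_i,p^2_i$, integer weights $w_i$, integer capacity $b$, whose value function $f$ has $2^n-1$ breakpoints, all at arguments $q>0$. From this instance I would build the \Problem\ instance $(G,u,c,B)$ on the two-terminal parallel graph $G$ with arcs $a_1,\dots,a_m$, setting $u^1(a_i)\coloneqq p^1_i$, $u^2(a_i)\coloneqq p^2_i$, $c(a_i)\coloneqq w_i$, and $B\coloneqq b$; these are admissible nonnegative integer data. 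The biobjective optimization problem that Theorem~\ref{thm:kpequiv} associates with this instance is then precisely the biobjective knapsack problem underlying $f$.

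Next I would transport the information across the two bijections in play. Theorem~\ref{thm:kpequiv} gives $\Gamma_E=X_E$ on the level of $\{0,1\}^m$-solutions, while $\VAL(G(\gamma),u)=v(e)-v(\gamma)$ with $e=(1,\dots,1)$ provides, on the level of objective vectors, the affine bijection $T\colon y\mapsto v(e)-y$. Since $T$ has linear part $-\mathrm{Id}$, it reverses the componentwise order and sends the dominance cone to its negative; hence $y$ is non-dominated for the maximization (knapsack) problem if and only if $T(y)$ is non-dominated for the minimization problem \Problem. Together with Corollary~\ref{cor:kpintrac} this already yields the first claim: the knapsack instance has super-polynomially many non-dominated points, hence so does \Problem\ on $G$.

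For the sharper statement I would argue that the $2^n-1$ breakpoints of $f$ witness at least $2^n-1$ distinct \emph{supported} non-dominated points of the biobjective knapsack problem. Indeed, for $q>0$ we have $f(q)=\max_x\{q\,(p^1\!\cdot x)+(p^2\!\cdot x)\}$, a weighted-sum scalarization with strictly positive weights; thus each of the $2^n$ linear pieces of the concave function $f$ is attained by an efficient solution, consecutive pieces have strictly decreasing slopes $p^1\!\cdot x$, and hence the corresponding objective vectors are pairwise distinct supported non-dominated points. Finally, since $\lambda\cdot\VAL(G(\gamma),u)=\lambda\cdot v(e)-\lambda\cdot v(\gamma)$, minimizing a strictly-positively-weighted sum over $\Gamma$ is equivalent to maximizing the corresponding weighted sum over the knapsack solutions, so $T$ carries supported non-dominated points to supported non-dominated points. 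Consequently \Problem\ on $G$ has at least $2^n-1$ supported non-dominated points, which is $2^{\Theta(\sqrt m)}$ and therefore not bounded by any polynomial in the size of the instance.

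The step needing the most care is the bookkeeping around supportedness rather than any deep argument: one must check that the breakpoints of $f$ correspond to \emph{distinct} supported points and not merely to efficient solutions (this is where the strict decrease of the slopes of the concave $f$ is used), and that the order-reversing affine map $T$ preserves supportedness in addition to non-dominance; both reduce to the elementary identity $\min_{z\in Z}\lambda\cdot z=\lambda\cdot v(e)-\max_{y\in T^{-1}(Z)}\lambda\cdot y$ for positive $\lambda$. Everything else is a routine encoding of the parametric knapsack data as a two-terminal parallel graph combined with the already-established equivalence of Theorem~\ref{thm:kpequiv}.
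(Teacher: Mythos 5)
Your proof is correct and follows the same route as the paper, which simply combines Corollary~\ref{cor:kpintrac} and Theorem~\ref{thm:kpequiv}; you additionally spell out the order-reversing affine map on objective vectors and the argument that Carstensen's breakpoints yield pairwise distinct \emph{supported} non-dominated points, details the paper leaves implicit. No gaps.
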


\section{Solution Procedures}\label{sec:solution}
In this section, we briefly state how to tackle \Problem\ on two-terminal parallel graphs and propose a solution procedure to solve the problem on two-terminal series-parallel graphs.

By Theorem \ref{thm:kpequiv}, we know that \Problem\ on two-terminal parallel graphs can be formulated as a biobjective knapsack problem.

Since there is an FPTAS for the biobjective knapsack problem, cf. \cite{papadimitriou2000approximability,safer1995fast}, \Problem\ can be solved by using the same approximation scheme.
\begin{corollary}
	There is an FPTAS for \Problem\ on two-terminal parallel graphs constructing an \(\varepsilon\)-approximation of the set of non-dominated points.\qed
\end{corollary}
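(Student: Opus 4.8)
The plan is to derive the result from Theorem~\ref{thm:kpequiv} together with the known approximability of the biobjective knapsack problem. By Theorem~\ref{thm:kpequiv}, on a two-terminal parallel graph the efficient set $\Gamma_E$ coincides with the efficient set of the knapsack problem whose profits are the arc capacities $(u^1(a_i),u^2(a_i))$, whose weights are the interdiction costs $c(a_i)$, and whose capacity bound is $B$; moreover the two objective vectors are tied together by $\VAL(G(\gamma),u)=v(e)-v(\gamma)$ with $e=(1,\ldots,1)$. Since the biobjective knapsack problem admits an FPTAS, cf.~\cite{papadimitriou2000approximability,safer1995fast}, it is essentially this scheme, transported through the above correspondence, that produces the desired $\varepsilon$-approximation of $Z_N$.

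The one point that is not purely mechanical is the direction of optimization. The knapsack formulation in Theorem~\ref{thm:kpequiv} \emph{maximizes} the interdicted capacity $v(\gamma)$, whereas \Problem\ \emph{minimizes} the residual flow $\VAL(G(\gamma),u)=v(e)-v(\gamma)$, and a multiplicative $(1+\varepsilon)$-guarantee need not survive the affine map $y\mapsto v(e)-y$: a residual flow may be as small as $0$ or $1$ while the corresponding interdicted capacity $v(\gamma)$ is exponentially large in the encoding length, so rescaling the precision of the maximization FPTAS by a factor depending on $v(e)$ would destroy polynomiality. I would therefore build the approximation directly in the objective space of \Problem, following the Papadimitriou--Yannakakis paradigm. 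Each value $\VAL(G(\gamma),u^j)$ is a non-negative integer bounded by $\sum_i u^j(a_i)\le mU$, so I would place a geometric grid of ratio $1+\varepsilon$ over $[1,mU]^2$ together with the two coordinate axes, giving $O\big((\varepsilon^{-1}\log(mU))^2\big)$ cells --- a number polynomial in the instance size and in $1/\varepsilon$.

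For each grid point $t=(t_1,t_2)$ I would call a gap routine that either returns some $\gamma\in\Gamma$ with $\VAL(G(\gamma),u)\leqq t$ or certifies that no $\gamma\in\Gamma$ satisfies $\VAL(G(\gamma),u)\leqq t/(1+\varepsilon)$; collecting one witness per successful cell yields the output set $\mathcal{A}$, and the standard covering argument over the cells then shows that every $\gamma\in\Gamma$ is $\varepsilon$-approximated by some $a\in\mathcal{A}$ (after halving $\varepsilon$ at the outset to absorb the factor $(1+\varepsilon)^2$ that arises). The gap routine is itself a knapsack feasibility test --- decide whether some $\gamma\in\{0,1\}^m$ satisfies $\sum_i c(a_i)\gamma_i\le B$, $\sum_i u^1(a_i)(1-\gamma_i)\le t_1$ and $\sum_i u^2(a_i)(1-\gamma_i)\le t_2$ --- which is solvable in pseudopolynomial time by dynamic programming over (accumulated cost, accumulated $u^1$)-states, tracking the least accumulated $u^2$, and is made to run in time polynomial in the instance and in $1/\varepsilon$ by the usual scaling and rounding of the capacities, exactly as for the classical knapsack FPTAS. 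Grid values on the coordinate axes (one objective equal to $0$) are handled separately by a one-dimensional sweep along the corresponding axis.

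I expect the main obstacle to be precisely this maximization-versus-minimization mismatch, rather than the knapsack machinery, which is standard: one must state and verify the $(1+\varepsilon)$-guarantee with respect to the residual-flow objective of \Problem\ instead of the interdicted-capacity objective of the reformulation, and treat the regime of very small objective values (residual flows equal to $0$) exactly. Once the approximation set is constructed directly in the objective space of \Problem\ through the gap routine above, bounding the number of cells, checking the covering property, and bounding the running time are routine.
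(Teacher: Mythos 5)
Your proposal is correct, and at the top level it follows the same route as the paper: invoke Theorem~\ref{thm:kpequiv} to identify \Problem\ on a two-terminal parallel graph with a biobjective knapsack structure, and then apply a Papadimitriou--Yannakakis-style approximation scheme. The paper, however, disposes of the corollary in one sentence (``since there is an FPTAS for the biobjective knapsack problem, \Problem\ can be solved by using the same approximation scheme''), which silently passes over exactly the point you flag: the cited knapsack FPTAS gives a multiplicative $(1+\varepsilon)$ guarantee on the \emph{maximized} interdicted capacity $v(\gamma)$, and this guarantee does not survive the complementation $y\mapsto v(e)-y$ to the \emph{minimized} residual flow $\VAL(G(\gamma),u)$ --- your observation that a residual of $1$ can be approximated only up to an additive error of order $\varepsilon\, v(e)$ is the right counterexample. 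Your repair --- laying a geometric $(1+\varepsilon)$-grid directly over the residual-flow objective space, handling the axes separately, and answering each grid query with a pseudopolynomial gap routine made polynomial by rounding --- is exactly the general Papadimitriou--Yannakakis construction applied to the minimization objectives, and it is sound: the gap problem is a two-constraint knapsack feasibility test, the number of cells is $O\bigl(\varepsilon^{-2}\log^2(mU)\bigr)$, and the covering argument is standard. So your write-up is not merely equivalent to the paper's; it supplies the justification the paper's one-liner actually needs, and it is consistent in spirit with how the authors later build their FPTAS for series-parallel graphs (Theorem~\ref{lemma:correctness}), where they likewise grid the objective space of the minimization problem rather than transporting a maximization guarantee.
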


Next, we propose a dynamic programming algorithm for \Problem\ for the case of two-terminal series-parallel graphs. We assume a decomposition tree~\(T_G\) for a given two-terminal series-parallel graph to be given.
For a graph \(H\) in \(T_G\), we denote by \(L(H,x)\) the set of non-dominated points of \Problem\ in \(H\) using an interdiction budget of \(x \in \mathbb{N}\), i.e., the interdictor's budget is \(x\). By \(s_H\) and \(t_H\), we refer to the source and sink of \(H\), respectively. 

For the case of \(H=(V_H,A_H)\) being a primitive graph, i.e., a leaf of \(T_G\), with \(V_H=\{s_H,t_H\}\) and \(A_H=\{a^*\}\), where \(a^*=(s_H,t_H)\), we can clearly compute \(L(H,x)\) for all \(x \in \{0,1,\ldots,B\}\) in the following way:
\begin{equation}\label{eq:1}
L(H,x) =\begin{cases}
\{(u^1(a^*),u^2(a^*))\}, & \text{ if } x = 0,1,\ldots,c(a^*)-1\\
\{(0,0)\}, & \text{ if } x = c(a^*),\ldots,B
\end{cases}
\end{equation}
Note that if \(c(a^*)>B\), then \(L(H,x)\) is equal to \(\{(u^1(a^*),u^2(a^*))\}\) for all \(x\in\{0,1,\ldots,B\}\).

Now, let \(H\) be the parallel composition of \(H_1\) and \(H_2\). Then, \(L(H,x)\) can be computed by adding each non-dominated point in \(L(H_1,k)\) to every non-dominated point in \(L(H_2,x-k)\) for all \(k\in\{0,\ldots,x\}\). Afterwards, dominated points are discarded with respect to the Pareto-order, which yields
\begin{equation}\label{eq:2}
L(H,x) = \min \left\{\bigcup_{k=0}^x L(H_1,k) \oplus L(H_2,x-k)\right\} \text{ for } x = 0,1,\ldots,B,
\end{equation}
where \(\oplus\) denotes the Minkowski sum.

Let \(H\) be the series composition of \(H_1\) and \(H_2\). Analogously to above, \(L(H,x)\) can be computed by combining each non-dominated point in \(L(H_1,k)\) with every non-dominated point in \(L(H_2,x-k)\) for all \(k\in\{0,\ldots,x\}\). We combine these non-dominated points by taking the respective minima of the maximum flows in each component. Again, dominated points are discarded afterwards:
\begin{equation}\label{eq:3}
L(H,x) = \min \left\{\bigcup_{k=0}^x L(H_1,k) \odot L(H_2,x-k)\right\} \text{ for } x = 0,1,\ldots,B,
\end{equation}
where \(r \odot s \coloneqq \left(\min\{r^1,s^1\},\min\{r^2,s^2\}\right)\) for \(r,s \in \mathbb{R}^2\) with \(r=(r^1, r^2)\) and \(s=(s^1,s^2)\) and \(R\odot S\coloneqq \{r\odot s\mid r \in R, s \in S\}\) for \(R,S \subset \mathbb{R}^2\).

\begin{theorem}\label{thm:correctness}
	After termination of the dynamic programming algorithm defined by formulas \eqref{eq:1}, \eqref{eq:2} and \eqref{eq:3}, the label set \(L(G,B)\) defines the set of non-dominated points for the \Problem\ instance.
\end{theorem}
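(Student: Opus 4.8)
The plan is a structural induction on the decomposition tree $T_G$. I would prove the stronger claim that for every graph $H$ occurring in $T_G$ and every budget $x\in\{0,1,\dots,B\}$ the label set $L(H,x)$ produced by \eqref{eq:1}--\eqref{eq:3} equals the set of non-dominated points of \Problem\ on $H$ with interdiction budget $x$; the theorem is then the case $H=G$, $x=B$ at the root of $T_G$. The base case is exactly the verification of \eqref{eq:1}: on a primitive graph with the single arc $a^{*}$ there are only the two strategies ``do nothing'' (value $(u^1(a^{*}),u^2(a^{*}))$, always feasible) and ``interdict $a^{*}$'' (value $(0,0)$, feasible iff $c(a^{*})\le x$), and $(0,0)\leqq(u^1(a^{*}),u^2(a^{*}))$, so in both subcases the non-dominated set is the one stated.

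For the inductive step I would first isolate the behaviour of $s$-$t$ maximum flows under the two compositions. If $H$ is the parallel or series composition of $H_1$ and $H_2$, then the arc sets of $H_1$ and $H_2$ are disjoint and only the terminals are shared, so any interdiction strategy $\gamma$ on $A(H)$ decomposes uniquely into strategies $\gamma^{(1)},\gamma^{(2)}$ on $A(H_1),A(H_2)$ with $c(\gamma)=c(\gamma^{(1)})+c(\gamma^{(2)})$, and $H(\gamma)$ is the corresponding composition of $H_1(\gamma^{(1)})$ and $H_2(\gamma^{(2)})$. For a single capacity function $w$, restricting a flow to each part and recombining gives, in the parallel case, $\VAL(H(\gamma),w)=\VAL(H_1(\gamma^{(1)}),w)+\VAL(H_2(\gamma^{(2)}),w)$; in the series case the value of every $s$-$t$ flow equals the flow it pushes through the identified middle vertex $t_1=s_2$, which yields ``$\le$'' against $\min\{\VAL(H_1(\gamma^{(1)}),w),\VAL(H_2(\gamma^{(2)}),w)\}$ by conservation, and ``$\ge$'' by scaling the larger of the two component maximum flows down to the smaller value and concatenating. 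Taking $w\in\{u^1,u^2\}$ coordinatewise, the value vector $\VAL(H(\gamma),u)$ equals $\VAL(H_1(\gamma^{(1)}),u)\oplus\VAL(H_2(\gamma^{(2)}),u)$ in the parallel case and $\VAL(H_1(\gamma^{(1)}),u)\odot\VAL(H_2(\gamma^{(2)}),u)$ in the series case.

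Next I would record two facts about the Pareto-minimum operator on finite subsets of $\mathbb{N}^2$: (i) $\min(\bigcup_i S_i)=\min(\bigcup_i\min S_i)$, and (ii) $\min(A\star B)=\min(\min A\star\min B)$ for $\star\in\{\oplus,\odot\}$. Here (i) is immediate from the external stability of Pareto sets of finite sets. For (ii) I would argue: any point $a\star b$ of $\min(A\star B)$ can be rewritten as $a^{*}\star b^{*}$ with $a^{*}\in\min A$, $b^{*}\in\min B$ chosen with $a^{*}\leqq a$, $b^{*}\leqq b$, because then $a^{*}\star b^{*}\leqq a\star b$ and Pareto-minimality forces equality; conversely any point of $\min(\min A\star\min B)$ is still non-dominated inside $A\star B$, since a dominator there would, after replacing its two factors by non-dominated points below them, produce a dominator already inside $\min A\star\min B$. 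The step that needs the most care is that $\odot$ is only monotone, not strictly monotone: one must work with ``$\leqq$'' throughout and use Pareto-minimality of the point under consideration to upgrade to equality, rather than assuming that strict domination propagates through $\odot$ (which it need not).

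To finish, fix a composition $H$ of $H_1,H_2$ and a budget $x$, and let $Y^{(i)}_k$ be the set of all attainable value vectors in $H_i$ with budget at most $k$, so $\min Y^{(i)}_k=L(H_i,k)$ by the induction hypothesis. A strategy $\gamma$ on $A(H)$ with $c(\gamma)\le x$ corresponds exactly to a pair with $c(\gamma^{(1)})\le k$ and $c(\gamma^{(2)})\le x-k$ for $k=c(\gamma^{(1)})\in\{0,\dots,x\}$, so the attainable set for $H$ with budget $x$ equals $\bigcup_{k=0}^{x}Y^{(1)}_k\star Y^{(2)}_{x-k}$ with $\star=\oplus$ (parallel) resp.\ $\star=\odot$ (series). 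Applying (i) and (ii), its Pareto minimum is $\min\bigl(\bigcup_{k=0}^{x}L(H_1,k)\star L(H_2,x-k)\bigr)$, which is exactly the right-hand side of \eqref{eq:2} resp.\ \eqref{eq:3}; hence $L(H,x)$ is the non-dominated set of \Problem\ on $H$ with budget $x$, and the induction closes. Apart from the non-strict monotonicity of $\odot$ flagged above, the only thing to watch is the ``budget at most $x$'' versus ``exactly $x$'' bookkeeping in the split, which is precisely why the union over all $k$, rather than a single value, appears in the recursions.
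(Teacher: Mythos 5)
Your proof is correct and rests on the same skeleton as the paper's: induction on the decomposition tree, with the key observation that \(\oplus\) and \(\odot\) are monotone, so a dominated component label can always be replaced by a non-dominated one without increasing the combined value. Where you differ is in organization and completeness. The paper argues by contradiction that no non-dominated point of \(H\) is ``missed'', i.e., it establishes only the containment of the true non-dominated set in \(L(H,x)\), leaving the converse (that every label in \(L(H,x)\) is attainable and non-dominated) implicit in the \(\min\) operation and in Corollary~\ref{cor:dynproalg}; it also takes for granted that \(\VAL(H(\gamma),u)\) decomposes as \(\VAL(H_1(\gamma^{(1)}),u)\oplus\VAL(H_2(\gamma^{(2)}),u)\), respectively with \(\odot\) in the series case. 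You instead prove the set equality directly via the two identities \(\min(\bigcup_i S_i)=\min(\bigcup_i \min S_i)\) and \(\min(A\star B)=\min(\min A\star \min B)\) for \(\star\in\{\oplus,\odot\}\), and you supply the flow-decomposition lemma explicitly. This buys rigor at exactly the two points the paper glosses over: that the attainable set of the composition really is the union over budget splits of the componentwise combinations, and that the non-strict monotonicity of \(\odot\) --- which the paper handles with the slightly awkward ``either contradicts that \(y\) is non-dominated or that \(y\) has not been found'' --- is dealt with cleanly by working with \(\leqq\) and upgrading to equality via Pareto-minimality of the point in question. The cost is only length; both arguments are sound, and yours additionally yields the attainability statement of Corollary~\ref{cor:dynproalg} as a byproduct rather than as a separate claim.
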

\begin{proof}
	We use induction on the size of the decomposition tree \(T_G\) of \(G\).
	Using \eqref{eq:1}, the set of non-dominated points for a primitive graph, i.e., a leaf of \(T_G\), can easily be computed.
	Now, let \(H\) be a graph in \(T_G\) and a parallel composition of \(H_1\) and \(H_2\) and let \(y = \VAL(H(\gamma),u)\) be a non-dominated point for \(H\) with a total interdiction cost of \(x^*\) for some \(x^* \in \{0,\ldots,B\}\) and \(\gamma\in\Gamma\) that has not been found. Let \(y=p+q\) with \(p=\VAL(H_1(\gamma^1),u)\) and \(q~=~\VAL(H_2(\gamma^2),u)\), where \(\gamma^1+\gamma^2=\gamma\) and let \(c=\sum_{a\in A}c(a)\cdot \gamma^1_a\). If \(p\in L(H_1,c)\) and \(q~\in~L(H_2,x^*-c)\), then \(y\) would have been added to \(L(H,x^*)\) due to construction of the algorithm.
	Thus, we may assume that \(p\notin L(H_1,c)\) or \(q\notin L(H_2,x^*-c)\). Without loss of generality, we assume that \(p\notin L(H_1,c)\). It follows that there exists a non-dominated point \(r \in L(H_1,c)\) with \(r\leq p\). Consequently, it holds that \(r+q\leq y\), which contradicts the assumption that \(y\) is non-dominated. 
	Now, let \(H\) be a series composition of \(H_1\) and \(H_2\) and let \(y=\VAL(H(\gamma),u)\) again be a non-dominated point for \(H\) with a total interdiction cost of \(x^*\) for some \(x^* \in \{0,\ldots,B\}\) and \(\gamma\in\Gamma\) that has not been found. 
	Let \(y=(\min\{p^1,q^1\},\min\{p^2,q^2\})\) with \(p=\VAL(H_1(\gamma^1),u)\) and \(q=\VAL(H_2(\gamma^2),u)\), where \(\gamma^1+\gamma^2=\gamma\) and let \(c=\sum_{a\in A}c(a)\cdot \gamma^1_a\). As described above, if \(p\in L(H_1,c)\) and \(q \in L(H_2,x^*-c)\), then \(y\) would have been added to \(L(H,x^*)\) due to construction of the algorithm. Thus, we may assume that \(p\notin L(H_1,c)\) or \(q\notin L(H_2,x^*-c)\). Without loss of generality, we assume that \(p\notin L(H_1,c)\). It follows that there exists a non-dominated point \(r \in L(H_1,c)\) with \(r\leq p\). Consequently, it holds that \((\min\{r^1,q^1\},\min\{r^2,q^2\})\leqq y\), which either contradicts the assumption that \(y\) is non-dominated or  the fact that \(y\) has not been found.\qed
\end{proof}

\begin{corollary}\label{cor:dynproalg}
	Let \(G\) be a two-terminal series-parallel graph and let \(T_G\) be its decomposition tree. After execution of the dynamic programming algorithm the following holds for all \(H\) in \(T_G\) and for all \(x\in\{0,\ldots,B\}\):
	\begin{itemize}
		\item If \(H\) is the parallel composition of \(H_1\) and \(H_2\), then for all \(p\in L(H,x)\) there exists \(r\in L(H_1,k)\) and \(s\in L(H_2,x-k)\) for some \(k\in \{0,\ldots,x\}\) with \(p=r+s\).
		\item If \(H\) is the series composition of \(H_1\) and \(H_2\), then for all \(p\in L(H,x)\) there exists \(r\in L(H_1,k)\) and \(s\in L(H_2,x-k)\) for some \(k\in \{0,\ldots,x\}\) with \(p=(\min\{r^1,s^1\},\min\{r^2,s^2\})\), where \(r=(r^1,r^2)\) and \(s=(s^1,s^2)\).
	\end{itemize}\qed
\end{corollary}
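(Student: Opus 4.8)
The plan is to read the claim off directly from the recursion formulas \eqref{eq:2} and \eqref{eq:3}, the only point worth a sentence being that the \(\min\)-operator, which by definition retains exactly the non-dominated points of a finite set, returns a \emph{subset} of that set. First I would fix a graph \(H\) in \(T_G\) and a budget \(x\in\{0,\ldots,B\}\). By definition of the decomposition tree, \(H\) is either a primitive graph, a parallel composition, or a series composition of two graphs \(H_1,H_2\) in \(T_G\); the corollary asserts nothing in the primitive case, so only the two composite cases need to be treated.

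For the parallel case, \eqref{eq:2} gives \(L(H,x)=\min\bigl\{\bigcup_{k=0}^x L(H_1,k)\oplus L(H_2,x-k)\bigr\}\). Since for any finite \(S\subseteq\mathbb{R}^2\) the set \(\min\{S\}\) of its non-dominated elements satisfies \(\min\{S\}\subseteq S\), we obtain \(L(H,x)\subseteq\bigcup_{k=0}^x L(H_1,k)\oplus L(H_2,x-k)\). Hence every \(p\in L(H,x)\) lies in \(L(H_1,k)\oplus L(H_2,x-k)\) for some \(k\in\{0,\ldots,x\}\), and by definition of the Minkowski sum there are \(r\in L(H_1,k)\) and \(s\in L(H_2,x-k)\) with \(p=r+s\), as claimed. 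The series case is identical, replacing \eqref{eq:2} by \eqref{eq:3} and \(\oplus\) by \(\odot\): from \(L(H,x)\subseteq\bigcup_{k=0}^x L(H_1,k)\odot L(H_2,x-k)\) and the definition \(R\odot S=\{r\odot s\mid r\in R,\, s\in S\}\) we get \(r\in L(H_1,k)\) and \(s\in L(H_2,x-k)\) with \(p=r\odot s=(\min\{r^1,s^1\},\min\{r^2,s^2\})\).

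The one implicit hypothesis to discharge is that every label set \(L(H,x)\) is finite, so that the \(\min\)-operator is well defined and the inclusion \(\min\{S\}\subseteq S\) is meaningful; this is a trivial induction along \(T_G\), since by \eqref{eq:1} the label set of a primitive graph is finite, and both \(\oplus\) and \(\odot\) applied to finitely many finite sets again yield a finite set. I expect no genuine obstacle here: the corollary merely records that the dynamic program is \emph{traceable}, i.e., that every point surviving in a composite label set is witnessed by an explicit pair of child labels that generated it, which is exactly what \eqref{eq:2} and \eqref{eq:3} state before the dominated points are pruned.
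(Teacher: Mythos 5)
Your proof is correct and matches the paper's treatment: the paper states this corollary with no proof body (just a \qed), regarding it as immediate from formulas \eqref{eq:2} and \eqref{eq:3} because the \(\min\)-operator only discards elements, so \(L(H,x)\) is a subset of the union of the pairwise combinations. Your unpacking of that one observation, plus the finiteness remark, is exactly the intended argument.
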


Further, the above described dynamic programming algorithm runs in pseudo-polynomial time.
\begin{theorem}
	The dynamic programming algorithm has a worst-case running-time complexity of \(\mathcal{O}(B^2m^3U^2\log(BmU))\).
\end{theorem}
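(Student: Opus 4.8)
The plan is to bound the total running time as the product of three factors: the number of nodes of the decomposition tree \(T_G\), the number of budget values processed at each node, and the cost of one composition step for a single budget value. The base cases \eqref{eq:1} at the leaves will turn out to be negligible.

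\emph{Size of the label sets.} First I would show that \(|L(H,x)| = \mathcal{O}(mU)\) for every graph \(H\) in \(T_G\) and every \(x \in \{0,\dots,B\}\). Every point of \(L(H,x)\) has the form \(\left(\VAL(H(\gamma),u^1),\VAL(H(\gamma),u^2)\right)\) for some \(\gamma \in \Gamma\), and by the max-flow--min-cut theorem \(\VAL(H(\gamma),u^i)\le \sum_{a\in\delta^+(s_H)}u^i(a)\le mU\) for \(i=1,2\); hence both coordinates lie in \(\{0,1,\dots,mU\}\). Since the \(\min\)-operations in \eqref{eq:2} and \eqref{eq:3} (and the base case \eqref{eq:1}) keep \(L(H,x)\) free of dominated points, \(L(H,x)\) is an antichain in this grid, so any two of its points differ in their first coordinate; therefore \(|L(H,x)|\le mU+1 = \mathcal{O}(mU)\).

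\emph{Cost of one composition step.} To evaluate \eqref{eq:2} or \eqref{eq:3} for a fixed \(x\), the algorithm forms the union over \(k=0,\dots,x\) of the sets \(L(H_1,k)\oplus L(H_2,x-k)\) (respectively \(L(H_1,k)\odot L(H_2,x-k)\)). Each such set is the set of pairwise sums (respectively componentwise minima) of two antichains of size \(\mathcal{O}(mU)\), hence has \(\mathcal{O}(m^2U^2)\) elements, and there are at most \(x+1\le B+1\) of them, so the candidate list contains \(\mathcal{O}(Bm^2U^2)\) points. Extracting its non-dominated subset in the plane — sort by the first coordinate, then a single linear sweep discarding dominated points — costs \(\mathcal{O}\!\left(Bm^2U^2\log(Bm^2U^2)\right)=\mathcal{O}\!\left(Bm^2U^2\log(BmU)\right)\), using \(\log(Bm^2U^2)=\mathcal{O}(\log(BmU))\). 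Evaluating the base case \eqref{eq:1} at a leaf for all \(B+1\) budgets costs only \(\mathcal{O}(B)\) and is dominated by the above.

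\emph{Putting it together.} The decomposition tree \(T_G\) is a binary tree with one leaf per arc of \(G\), hence has \(\mathcal{O}(m)\) nodes, cf.\ \cite{valdes1982recognition}. At each of the \(\mathcal{O}(m)\) internal nodes the algorithm computes \(L(H,x)\) for \(x=0,1,\dots,B\), i.e.\ \(B+1=\mathcal{O}(B)\) times, and each such computation costs \(\mathcal{O}(Bm^2U^2\log(BmU))\) by the previous paragraph. Multiplying the three factors yields a total running time of \(\mathcal{O}\!\left(m\cdot B\cdot Bm^2U^2\log(BmU)\right)=\mathcal{O}(B^2m^3U^2\log(BmU))\), as claimed. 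I expect the only genuinely delicate point to be the \(\mathcal{O}(mU)\) bound on \(|L(H,x)|\), which combines the max-flow bound \(\VAL(H(\gamma),u^i)\le mU\) with the observation that a set of mutually non-dominated integer points in a grid of side \(mU\) has at most \(mU+1\) elements; everything else is routine counting, modulo the harmless absorption of \(\log(Bm^2U^2)\) into \(\log(BmU)\).
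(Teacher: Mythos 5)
Your proposal is correct and follows essentially the same route as the paper: bound \(|L(H,x)|\) by \(\mathcal{O}(mU)\), count \(\mathcal{O}(B^2m^2U^2)\) candidate labels per internal node of the \(\mathcal{O}(m)\)-node decomposition tree, and pay a \(\log(BmU)\) factor for dominance filtering. The only addition is your explicit antichain argument for the \(mU\) bound on the label sets, which the paper states without proof; the rest is the same accounting, merely grouped per budget value rather than summed over the double index at once.
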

\begin{proof}
	First, note that the size of \(L(H,x)\) is bounded from above by \(mU\), i.e., \(|L(H,x)| \leq mU\) for all \(x\in\{0,\ldots,B\}\) and for any \(H\) in \(T_G\). Further, the decomposition tree \(T_G\) has \(2m-1\) vertices containing \(m\) leaf vertices. The set of non-dominated points for a leaf vertex can be computed in constant time. Thus, in total \(\mathcal{O}(m)\) work is involved for all leaf vertices. For each of the remaining \(m-1\) vertices, we have to create at most \[\sum_{x=0}^{B}\sum_{k=0}^{x} |L(H_1,k)| \cdot |L(H_2,x-k)| \in \mathcal{O}(B^2m^2U^2)\] labels regardless of \(H\) being a series or a parallel composition of \(H_1\) and \(H_2\), respectively. Thus, the total number of labels created is in \(\mathcal{O}(B^2m^3U^2)\). Further, we have to check these label sets for non-dominance, which can be done in \(\mathcal{O}(B^2m^3U^2 \log(B^2m^3U^2)) = \mathcal{O}(B^2m^3U^2\log(BmU))\), cf. \cite{kung1975finding}. In total, a running-time complexity of \(\mathcal{O}(B^2m^3U^2\log(BmU))\) is achieved, which concludes the proof.\qed
\end{proof}

\begin{remark}
	Note that the worst-case running-time complexity in case of \(c(a)=1\) for all \(a\in A\) reduces to \(\mathcal{O}(m^5U^2\log(mU))\), since \(B\) can assumed to be smaller than \(m\). Further, the computation of \(L(H,x)\) in case of \(H\) being a primitive graph simplifies to:
	\begin{equation}\label{eq:4}
	L(H,x) =\begin{cases}
	\{(u^1(a),u^2(a))\}, & \text{ if } x = 0\\
	\{(0,0)\}, & \text{ else.}
	\end{cases}
	\end{equation}
	Thus, the above described dynamic programming algorithm can analogously be defined in the case of unit interdiction costs, i.e., \(c(a)=1\) for all \(a\in A\).
\end{remark}

For the remainder of this article, we assume that \(c(a)=1\) for all \(a\in A\). 
Further, note that both attainable maximum flow values \(\VAL(G(\gamma),u^i)\) lie in the interval between \(0\) and \(mU^i\), \(i=1,2\), for all \(\gamma\in\Gamma\). With respect to this setting, we extend the previously presented dynamic programming algorithm to an FPTAS by partitioning the objective space into polynomially many subintervals. More specifically, we partition the range between \(0\) and \(mU^i\) into \(q^i\) intervals in the following way:
\begin{equation*}
[0,(1+\varepsilon)^0), [(1+\varepsilon)^0, (1+\varepsilon)^1),\ldots, [(1+\varepsilon)^{q^i-1}, (1+\varepsilon)^{q^i}),
\end{equation*}
where \(q^i\coloneqq \lceil\log_{1+\varepsilon}(mU^i)\rceil\) and \(\varepsilon>0\). Note that \(q^i \in \mathcal{O}(\frac{1}{\varepsilon}\log(mU^i))\).
Further, we define two different kinds of label sets over a graph \(H\) in \(T_G\), denoted by \(A_{\varepsilon}(H,x)\) and \(L_{\varepsilon}(H,x)\) for all \(x\in \{0,1,\ldots,B\}\). The label sets \(L_{\varepsilon}(H,x)\) are computed by using the auxiliary label sets \(A_{\varepsilon}(H,x)\) and, finally, denote an \(\varepsilon\)-approximation on the set of non-dominated points in \(H\), where the interdictor has an interdiction budget of \(x\). The labels in \(A_{\varepsilon}(H,x)\) are obtained as follows.
For the case of \(H=(V_H,A_H)\) being a primitive graph with \(V_H=\{s_H,t_H\}\) and \(A_H=\{a^*\}\), where \(a^*=(s_H,t_H)\), we compute \(A_{\varepsilon}(H,x)\) for all \(x \in \{0,1,\ldots,B\}\), in the following way:
\begin{equation}\label{eq:5}
A_{\varepsilon}(H,x) =\begin{cases}
\{((1+\varepsilon)^i, (1+\varepsilon)^j, \gamma')\}, & \text{ if } x = 0\\
\{(0,0, \hat{\gamma})\}, & \text{ else,}
\end{cases}
\end{equation}
where \(\gamma'=(0,\ldots,0)\) and \(\hat{\gamma}_a\) equals \(1\) if \(a\) equals \(a^*\) and \(0\) otherwise. Further, \(i\) and \(j\) are chosen to be the maximal possible indices such that \((1+\varepsilon)^i\leq u^1(a^*)\) and \((1+\varepsilon)^j\leq u^2(a^*)\), respectively, i.e., \(i \coloneqq \max\{k\in\mathbb{N}\mid (1+\varepsilon)^k\leq u^1(a^*)\}\) and \(j \coloneqq \max\{k\in\mathbb{N}\mid (1+\varepsilon)^k\leq u^2(a^*)\}\). In the following, we say a label \(l=(l^1, l^2, \gamma^l) \in A_{\varepsilon}(H,x)\) dominates another label \(q=(q^1, q^2, \gamma^q) \in A_{\varepsilon}(H, x)\) for some \(x\in \{0,1,\ldots,B\}\), if \((l^1, l^2) \leq (q^1, q^2)\). 

In case of \(H\) being the parallel composition of \(H_1\) and \(H_2\), we compute \(A_{\varepsilon}(H,x)\) as follows:
\begin{equation}\label{eq:6}
A_{\varepsilon}(H,x) = \min \left\{\bigcup_{k=0}^x A_{\varepsilon}(H_1,k) \oplus A_{\varepsilon}(H_2,x-k)\right\} \text{ for } x = 0,1,\ldots,B, 
\end{equation}
where \(r\oplus s\coloneqq ((1+\varepsilon)^i, (1+\varepsilon)^j, \gamma^r+\gamma^s)\) with 
\begin{align*}
&i\coloneqq\max\{k\in\mathbb{N}\mid (1+\varepsilon)^k\leq \VAL(H_1(\gamma^r),u^1)+\VAL(H_2(\gamma^s),u^1)\},\\
&j\coloneqq\max\{k\in\mathbb{N}\mid (1+\varepsilon)^k\leq \VAL(H_1(\gamma^r),u^2)+\VAL(H_2(\gamma^s),u^2)\}
\end{align*}
and \(R\oplus S \coloneqq\{r\oplus s\mid r\in R, s\in S\}\).

If \(H\) is the series composition of \(H_1\) and \(H_2\), we compute \(A_{\varepsilon}(H,x)\) as follows:
\begin{equation}\label{eq:7}
A_{\varepsilon}(H,x) = \min \left\{\bigcup_{k=0}^x A_{\varepsilon}(H_1,k) \odot A_{\varepsilon}(H_2,x-k)\right\} \text{ for } x = 0,1,\ldots,B,
\end{equation}
where \(r\odot s\coloneqq ((1+\varepsilon)^i, (1+\varepsilon)^j, \gamma^r+\gamma^s)\) with 
\begin{align*}
&i\coloneqq\max\{k\in\mathbb{N}\mid (1+\varepsilon)^k\leq \min\{\VAL(H_1(\gamma^r),u^1), \VAL(H_2(\gamma^s),u^1)\}\},\\
&j\coloneqq\max\{k\in\mathbb{N}\mid (1+\varepsilon)^k\leq \min\{\VAL(H_1(\gamma^r),u^2), \VAL(H_2(\gamma^s),u^2)\}\}
\end{align*}
and \(R\odot S \coloneqq \{r\odot s\mid r\in R, s\in S\}\).

Note that we simply add up interdiction strategies both in the case of a series and parallel composition. Using the auxiliary label sets \(A_{\varepsilon}(H,x)\), we compute \(L_{\varepsilon}(H,x)\) as follows. For each label \(l=(l^1,l^2,\gamma^l) \in A_{\varepsilon}(H,x)\), we compute \(l^*~\coloneqq~\VAL(H(\gamma^l),u)\) and put it into \(L_{\varepsilon}(H,x)\). Again, dominated labels get discarded afterwards.
Note that we could simply store this information additionally in \(A_{\varepsilon}(H,x)\). However, for the sake of clarity and better readability, it is useful to be able to refer to both kinds of labels.

\begin{observation}
	Let \(\varepsilon>0\). First, observe that \(|A_{\varepsilon}(H,x)| \geq |L_{\varepsilon}(H,x)|\) for all \(H\) in \(T_G\) and for all \(x \in \{0,1,\ldots,B\}\). Second, note that for all \(p\in L_{\varepsilon}(H, x)\) there exists a label \(q \in A_{\varepsilon}(H, x)\) with \(q = (q^1, q^2, \gamma')\) and \(p = (\VAL(H(\gamma'),u^1), \VAL(H(\gamma'),u^2))\) and \((q^1, q^2)\leqq p\) for all \(H\) in \(T_G\) and for all \(x\in\{0,\ldots,B\}\). 
\end{observation}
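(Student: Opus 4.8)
The plan is to obtain both assertions directly from the recursions \eqref{eq:5}, \eqref{eq:6} and \eqref{eq:7}, from the way $L_\varepsilon(H,x)$ is built out of $A_\varepsilon(H,x)$, and from two elementary identities for maximum flows on composed two-terminal graphs: over a parallel composition the maximum flow value is the sum of the two component values, and over a series composition it is the minimum of the two component values.

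The first assertion is immediate from the construction. By definition, $L_\varepsilon(H,x)$ is obtained by mapping each label $l=(l^1,l^2,\gamma^l)\in A_\varepsilon(H,x)$ to the point $\VAL(H(\gamma^l),u)\in\mathbb{N}^2$ and then discarding dominated points. The image of a set under a map has cardinality at most that of the set, and discarding points can only decrease the cardinality further, so $|L_\varepsilon(H,x)|\le|A_\varepsilon(H,x)|$ for every $H$ in $T_G$ and every $x\in\{0,\dots,B\}$.

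For the second assertion I would first prove, by induction on the decomposition tree $T_G$, the auxiliary claim that every label $l=(l^1,l^2,\gamma^l)\in A_\varepsilon(H,x)$ has $\gamma^l\in\{0,1\}^m$ with support contained in $A(H)$ and satisfies $(l^1,l^2)\leqq\bigl(\VAL(H(\gamma^l),u^1),\VAL(H(\gamma^l),u^2)\bigr)$. For a primitive graph this is read off \eqref{eq:5}: if $x=0$ the strategy $\gamma^l$ is empty, so $\VAL(H(\gamma^l),u^i)=u^i(a^*)$, and $i,j$ are chosen maximal with $(1+\varepsilon)^i\le u^1(a^*)$ and $(1+\varepsilon)^j\le u^2(a^*)$ (interpreting the rounded coordinate as $0$ in the degenerate case of a zero capacity); if $x\ge1$ then $\gamma^l$ interdicts $a^*$ and $l^1=l^2=0=\VAL(H(\gamma^l),u^i)$. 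For the inductive step, let $H$ be the parallel composition of $H_1$ and $H_2$ and let $l=r\oplus s$ with $r\in A_\varepsilon(H_1,k)$ and $s\in A_\varepsilon(H_2,x-k)$. Since $A(H)$ is the disjoint union of $A(H_1)$ and $A(H_2)$, the induction hypothesis gives that $\gamma^l=\gamma^r+\gamma^s$ lies in $\{0,1\}^m$ with support in $A(H)$, and that $H(\gamma^l)$ is the parallel composition of $H_1(\gamma^r)$ and $H_2(\gamma^s)$; additivity of the maximum flow then gives $\VAL(H(\gamma^l),u^i)=\VAL(H_1(\gamma^r),u^i)+\VAL(H_2(\gamma^s),u^i)$, which is exactly the quantity appearing in the definition of $\oplus$ in \eqref{eq:6}, whence $l^i\le\VAL(H(\gamma^l),u^i)$. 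The series case is identical with ``sum'' replaced by ``minimum'' and \eqref{eq:6} by \eqref{eq:7}. In both composition cases the domination filter $\min\{\cdots\}$ only removes labels, so it does not affect the claim for the surviving ones.

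Granting the auxiliary claim, the second assertion follows: for $p\in L_\varepsilon(H,x)$ there is, by the construction of $L_\varepsilon$ from $A_\varepsilon$, a label $q=(q^1,q^2,\gamma')\in A_\varepsilon(H,x)$ with $p=\bigl(\VAL(H(\gamma'),u^1),\VAL(H(\gamma'),u^2)\bigr)$, and $(q^1,q^2)\leqq p$ by the auxiliary claim applied to $q$. I do not expect a real obstacle here: the only steps that need a little care are verifying that $\gamma^r$ and $\gamma^s$ have disjoint supports, so that $\gamma^l=\gamma^r+\gamma^s$ is a genuine $0/1$ interdiction strategy and $H(\gamma^l)$ is indeed the corresponding composition of the two interdicted subgraphs, and the harmless degenerate case of a zero arc capacity in the base of the induction. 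Everything else is a direct consequence of the recursions together with the two maximum-flow composition identities.
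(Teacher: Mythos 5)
Your proof is correct and follows the reasoning the paper implicitly intends (the Observation is stated there without proof): the cardinality bound is immediate because \(L_{\varepsilon}(H,x)\) is a filtered image of \(A_{\varepsilon}(H,x)\) under \(l\mapsto \VAL(H(\gamma^l),u)\), and your inductive auxiliary claim that each rounded label componentwise underestimates the true flow vector of its stored strategy is exactly what the choice of maximal exponents in \eqref{eq:5}--\eqref{eq:7}, together with the additivity/minimum identities for flows over parallel/series compositions, guarantees. The two points you flag for care (disjoint supports of \(\gamma^r\) and \(\gamma^s\), and the zero-capacity degenerate case) are handled appropriately.
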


To prove that this procedure defines an FPTAS for (BMFNI) with unit interdiction costs, we need to show that the points in \(L_{\varepsilon}(G,B)\) define an \(\varepsilon\)-approximation of \(L(G,B)\) and that the size of the label sets throughout the execution of the algorithm is bounded by a polynomial of the size of the problem instance.

\begin{theorem}\label{lemma:correctness}
	Let \(\varepsilon>0\). After termination of the approximation scheme defined by formulas \eqref{eq:5}, \eqref{eq:6} and \eqref{eq:7}, the label set \(L_{\varepsilon}(G,B)\) defines an \(\varepsilon\)-approxi-mation on the set of non-dominated points of the \Problem\ instance.
\end{theorem}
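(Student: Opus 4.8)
\emph{Approach.} I would argue by induction on the decomposition tree $T_G$, running parallel to the proof of Theorem~\ref{thm:correctness} but carrying along the multiplicative gap introduced by the geometric bucketing. By Theorem~\ref{thm:correctness} the set $L(G,B)$ is exactly the non-dominated set, so every $\gamma\in\Gamma$ satisfies $\VAL(G(\gamma),u)\geqq y$ for some $y\in L(G,B)$; it therefore suffices to prove that for every $y\in L(G,B)$ some label $(\ell^1,\ell^2,\gamma^\ell)\in A_\varepsilon(G,B)$ has $\VAL(G(\gamma^\ell),u)\leqq(1+\varepsilon)\,y$. The strategies $\gamma^\ell$ carried by the labels of $A_\varepsilon(G,B)$ then form an $\varepsilon$-approximation, and their flow vectors are exactly the points of $L_\varepsilon(G,B)$.

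\emph{Step~1 (per-label rounding bound) and base case.} The workhorse is the two-sided estimate, proved by a short auxiliary induction on $T_G$: for every $H$ in $T_G$, every $x$, and every $l=(l^1,l^2,\gamma^l)\in A_\varepsilon(H,x)$,
\[
 (l^1,l^2)\ \leqq\ \bigl(\VAL(H(\gamma^l),u^1),\,\VAL(H(\gamma^l),u^2)\bigr)\ \leqq\ (1+\varepsilon)\,(l^1,l^2).
\]
The left inequality is the Observation. The right one holds because consecutive bucket endpoints differ by the factor $1+\varepsilon$ and, crucially, the floor-to-bucket step in \eqref{eq:6} (resp.\ \eqref{eq:7}) is applied to the \emph{exact} combined value $\VAL(H_1(\gamma^r),u^i)+\VAL(H_2(\gamma^s),u^i)$ (resp.\ $\min\{\VAL(H_1(\gamma^r),u^i),\VAL(H_2(\gamma^s),u^i)\}$), which equals $\VAL(H(\gamma^r+\gamma^s),u^i)$ because the maximum flow of a parallel composition is the sum, and that of a series composition the coordinatewise minimum, of the two component maximum flows. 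The base case, $H$ primitive, is then immediate: \eqref{eq:4} and \eqref{eq:5} give $L_\varepsilon(H,x)=L(H,x)$ outright, while the two-sided bound follows from the choice of $i,j$ in \eqref{eq:5}.

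\emph{Inductive step.} Let $H$ be the parallel (resp.\ series) composition of $H_1,H_2$, fix $x\in\{0,\dots,B\}$, and let $y\in L(H,x)$ be non-dominated, attained by $\gamma\in\Gamma$. By Corollary~\ref{cor:dynproalg} there are $k\in\{0,\dots,x\}$, $r\in L(H_1,k)$ and $s\in L(H_2,x-k)$ with $y=r+s$ (parallel), resp.\ $y$ the coordinatewise minimum of $r$ and $s$ (series). By the induction hypothesis, $L_\varepsilon(H_1,k)$ and $L_\varepsilon(H_2,x-k)$ contain points $r'\leqq(1+\varepsilon)r$ and $s'\leqq(1+\varepsilon)s$; pick labels $\ell_1\in A_\varepsilon(H_1,k)$, $\ell_2\in A_\varepsilon(H_2,x-k)$ with $r'=\VAL(H_1(\gamma^{\ell_1}),u)$ and $s'=\VAL(H_2(\gamma^{\ell_2}),u)$. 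The combination $\ell_1\oplus\ell_2$ (resp.\ $\ell_1\odot\ell_2$) from \eqref{eq:6} (resp.\ \eqref{eq:7}) is generated for this $k$; it carries the strategy $\gamma^{\ell_1}+\gamma^{\ell_2}\in\Gamma$, whose true flow vector is $r'+s'$ (resp.\ the coordinatewise minimum of $r'$ and $s'$) by the maximum-flow composition rule, hence $\leqq(1+\varepsilon)\,y$ by monotonicity of $+$ and $\min$. If this label survives the $\min$ in \eqref{eq:6}/\eqref{eq:7}, then $\VAL(H(\gamma^{\ell_1}+\gamma^{\ell_2}),u)\in L_\varepsilon(H,x)$ and we are finished; otherwise a surviving label $\ell'$ dominates it as labels, and Step~1 (applied to $\ell'$ and to $\ell_1\oplus\ell_2$) controls $\VAL(H(\gamma^{\ell'}),u)$ from above. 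Taking $H=G$, $x=B$ yields the theorem.

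\emph{Main obstacle.} The delicate point is the interaction of the $\min$-pruning with the bucketing in the inductive step: a surviving label $\ell'$ only rounded-dominates the discarded candidate, and passing from its rounded pair back to actual flow values costs a further factor $1+\varepsilon$ through Step~1, so naive bookkeeping propagates a factor $(1+\varepsilon)^{\mathcal{O}(\mathrm{depth}\,T_G)}$ instead of the claimed $(1+\varepsilon)$. Getting the clean single factor requires phrasing the induction hypothesis on the \emph{rounded} pairs — that $A_\varepsilon(H,x)$ always contains a label whose stored pair is coordinatewise at most the bucket-rounding of $y$ — so that the factor $1+\varepsilon$ is paid only once, upon reading the points of $L_\varepsilon(G,B)$ off their labels at the root; and in any event, running the scheme with bucket ratio $1+\varepsilon'=(1+\varepsilon)^{1/(2m)}$ keeps every $q^i$ polynomial and still delivers an FPTAS. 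This is the step I expect to require the most care, and it is exactly where the exactness of the series/parallel maximum-flow composition rule is indispensable.
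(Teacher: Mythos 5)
Your overall strategy is the same as the paper's: induction on the decomposition tree $T_G$, using Corollary~\ref{cor:dynproalg} to split a non-dominated point of $L(H,x)$ into pieces from $L(H_1,k)$ and $L(H_2,x-k)$, the two-sided bound relating a stored (rounded) pair to the exact flow vector of its strategy, and the exactness of the series/parallel composition rule for maximum flows. Your Step~1 and your inductive combination step coincide with the paper's argument.

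The substantive point is your ``main obstacle'', and you are right to flag it: it is precisely the step the paper passes over. The paper's proof asserts that either the candidate label $\gamma^{r''}+\gamma^{s''}$ yields a point of $L_{\varepsilon}(H,x)$, or some surviving point $y\in L_{\varepsilon}(H,x)$ satisfies $y\leqq q$ \emph{exactly}. That dichotomy is justified for the second pruning stage (forming $L_{\varepsilon}$ from $A_{\varepsilon}$ by comparing exact flow vectors), but not for the first: the $\min$ in \eqref{eq:6}/\eqref{eq:7} compares \emph{rounded} pairs, and a label whose rounded pair dominates need not have a dominating exact flow vector --- it can be worse by a factor of up to $1+\varepsilon$, which is exactly the per-level leak you describe. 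Be aware, however, that your first proposed repair (carrying the induction on the rounded pairs, i.e.\ maintaining that $A_{\varepsilon}(H,x)$ contains a label whose stored pair is coordinatewise at most the rounding of $y$) does not close either: the stored pair of a combined label is the rounding of the sum (resp.\ minimum) of the two \emph{exact} component values, and since rounding down to powers of $1+\varepsilon$ is not subadditive, the invariant is not preserved across a composition --- a factor still escapes at each tree level. The robust fix is the one you give as a fallback: since the depth of $T_G$ is $\mathcal{O}(m)$, running the scheme with bucket ratio $(1+\varepsilon)^{1/\Theta(m)}$ keeps each $q^i$ polynomial in the input size and in $\tfrac{1}{\varepsilon}$ and absorbs the accumulated loss into a single factor $1+\varepsilon$ at the root. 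With that modification your argument is complete; without it, neither your sketch nor the paper's own proof fully establishes the single-$(1+\varepsilon)$ guarantee as stated.
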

\begin{proof}
	Again, we use induction on the size of the decomposition tree \(T_G\) of \(G\). Let \(H=(V_H,A_H)\) be a leaf of \(T_G\) with \(V_H=\{s_H,t_H\}\) and \(A_H=\{a^*\}\). We need to distinguish two cases, i.e., \(x=0\) and \(x>0\). For \(x=0\), let \(g\) be the unique label in \(A_{\varepsilon}(H,0)\) and let \(\gamma^g=(0,\ldots,0)\) be its interdiction strategy. Thus, we create the unique label \(q\coloneqq\VAL(H(\gamma^g),u)=u(a^*)\) at \(L_{\varepsilon}(H,0)\). Consequently, it holds for the unique label \(p \in L(H,0)\) with \(p=u(a^*)\) that \(q = u(a^*) \leqq (1+\varepsilon)u(a^*) = (1+\varepsilon)p\).
	For \(x>0\), let \(h\) be the unique label in \(A_{\varepsilon}(H,x)\) and let \(\gamma^h\) be its interdiction strategy with \(\gamma^h_{a}\) equals \(1\) if \(a\) equals \(a^*\) and \(0\) otherwise. Thus, we create the unique label \(q\coloneqq\VAL(H(\gamma^h),u)=(0,0)\) at \(L_{\varepsilon}(H,x)\). Consequently, it holds for the unique label \(p \in L(H,x)\) with \(p=(0,0)\) that \(q = (0,0) \leqq (1+\varepsilon)(0,0) = (1+\varepsilon)p\).
	
	Now, let \(H\) be the parallel composition of \(H_1\) and \(H_2\). Further, let \(x \in \{0,1,\ldots,B\}\) and \(p\in L(H,x)\) be a non-dominated point. By Corollary \ref{cor:dynproalg}, we know that there exist \(r\in L(H_1,k)\) and \(s\in L(H_2,x-k)\) for some \(k\in \{0,1,\ldots,x\}\) with \(r+s=p\). By induction hypothesis, we know that there exist \(r'\in L_{\varepsilon}(H_1, k)\) and \(s' \in L_{\varepsilon}(H_2, x-k)\) with \(r'\leqq (1+\varepsilon)r\) and \(s'\leqq(1+\varepsilon)s\). Due to construction, there exist \(r''\in A_{\varepsilon}(H_1, k)\) and \(s''\in A_{\varepsilon}(H_2, x-k)\) such that \(r' = \VAL(H_1(\gamma^{r''}),u)\) and \(s' = \VAL(H_2(\gamma^{s''}),u)\), where \(\gamma^{r''}\) and \(\gamma^{s''}\) denote the corresponding interdiction strategies of \(r''\) and \(s''\), respectively. Now, two cases might occur that need to be distinguished. Either there exists a label \(q = \VAL(H(\gamma^*),u) \in L_{\varepsilon}(H,x)\) with \(\gamma^* \coloneqq \gamma^{r''} + \gamma^{s''}\) or there exists a label \(y = \VAL(H(\hat{\gamma}),u) \in L_{\varepsilon}(H,x)\) with \(y\leqq q\) for some \(\hat{\gamma}\in \Gamma\). Since \(y\leqq q\), we only have to consider the former case. Therefore, let us assume that
	\(q \in L_{\varepsilon}(H,x)\). Thus, it holds \(q=\VAL(H(\gamma^*),u) = \VAL(H_1(\gamma^{r''}),u) + \VAL(H_2(\gamma^{s''}),u) = r' + s' \leqq (1+\varepsilon)r + (1+\varepsilon)s = (1+\varepsilon)(r+s) = (1+\varepsilon)p\). 
	
	Now, let \(H\) be the series composition of \(H_1\) and \(H_2\). Again, let \(x \in \{0,\ldots,B\}\) and \(p\in L(H,x)\) be a non-dominated point. By Corollary \ref{cor:dynproalg}, we know that there exist \(r\in L(H_1,k)\) and \(s\in L(H_2,x-k)\) for some \(k\in \{0,1,\ldots,x\}\) with \((\min\{r^1, s^1\}, \min\{r^2, s^2\}) = p\). By induction hypothesis, we know that there exist \(r'\in L_{\varepsilon}(H_1, k)\) and \(s' \in L_{\varepsilon}(H_2, x-k)\) with \(r'\leqq (1+\varepsilon)r\) and \(s'\leqq(1+\varepsilon)s\). Due to construction, there exist \(r''\in A_{\varepsilon}(H_1, k)\) and \(s''\in A_{\varepsilon}(H_2, x-k)\) such that \(r' = \VAL(H_1(\gamma^{r''}),u)\) and \(s' = \VAL(H_2(\gamma^{s''}),u)\), where \(\gamma^{r''}\) and \(\gamma^{s''}\) denote the corresponding interdiction strategies of \(r''\) and \(s''\), respectively. Again, two cases might occur that need to be distinguished. Either there exists a label \(q = \VAL(H(\gamma^*),u) \in L_{\varepsilon}(H,x)\) with \(\gamma^* \coloneqq \gamma^{r''} + \gamma^{s''}\) or there exists a label \(y = \VAL(H(\hat{\gamma}),u) \in L_{\varepsilon}(H,x)\) with \(y\leqq q\) for some \(\hat{\gamma}\in \Gamma\). Since \(y\leqq q\), we only have to consider the first case. Therefore, let us assume that
	\(q \in L_{\varepsilon}(H,x)\). Thus, it holds that
	\begin{align*}
	q&=\VAL(H(\gamma^*),u)\\
	&= (\min\{\VAL(H_1(\gamma^{r''}),u^1), \VAL(H_2(\gamma^{s''}),u^1)\},\\ &\min\{\VAL(H_1(\gamma^{r''}),u^2), \VAL(H_2(\gamma^{s''}),u^2)\})\\
	&= (\min\{r^{'1}, s^{'1}\}, \min\{r^{'2}, s^{'2}\})\\
	& \leqq (\min\{(1+\varepsilon)r^1, (1+\varepsilon)s^1\}, \min\{(1+\varepsilon)r^2, (1+\varepsilon)s^2\})\\
	&= (1+\varepsilon)(\min\{r^1,s^1\}, \min\{r^2,s^2\}) = (1+\varepsilon)p,
	\end{align*}
	which concludes the proof.\qed
\end{proof}

\begin{corollary}\label{cor:approximation}
	Let \(G\) be a two-terminal series-parallel graph and let \(T_G\) be its decomposition tree. After execution of the approximation scheme for some \(\varepsilon~>~0\), the label set \(L_{\varepsilon}(H,x)\) defines an \(\varepsilon\)-approximation on the set \(L(H,x)\) for all \(H\) in \(T_G\) and for all \(x\in\{0,\ldots,B\}\).\qed
\end{corollary}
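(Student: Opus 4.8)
The plan is to recognise that Corollary~\ref{cor:approximation} is nothing but the induction statement that was actually verified inside the proof of Theorem~\ref{lemma:correctness}: that proof proceeds by structural induction over the decomposition tree \(T_G\), and at no point does it use that the graph under consideration is \(G\) itself or that the budget is exactly \(B\) — the case \(H=G\), \(x=B\) is merely the final application. Hence, strictly speaking, the corollary requires no work beyond recording this observation. For completeness I would nonetheless restate the argument with a generic subgraph \(H\) and a generic budget \(x\).

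First I would unwind the definitions. By the proof of Theorem~\ref{thm:correctness} together with Corollary~\ref{cor:dynproalg}, the exact label set \(L(H,x)\) is precisely the set of non-dominated points of the \Problem\ instance on the subgraph \(H\) with interdiction budget \(x\); so, in the sense of Section~\ref{sec:preliminaries}, saying that \(L_{\varepsilon}(H,x)\) is an \(\varepsilon\)-approximation of \(L(H,x)\) means that for every \(p\in L(H,x)\) there is some \(q\in L_{\varepsilon}(H,x)\) with \(q\leqq(1+\varepsilon)\,p\).

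Next I would run the induction on \(T_G\). The base case is a leaf \(H\) with unique arc \(a^*\): for \(x=0\) the single label of \(L_{\varepsilon}(H,0)\) equals \(u(a^*)\) and \(\varepsilon\)-approximates the unique point \(u(a^*)\in L(H,0)\), while for \(x>0\) the single label \((0,0)\) \(\varepsilon\)-approximates \((0,0)\in L(H,x)\). For the inductive step, let \(H\) be the parallel (resp. series) composition of \(H_1\) and \(H_2\) and fix \(p\in L(H,x)\). Corollary~\ref{cor:dynproalg} yields \(k\in\{0,\dots,x\}\), \(r\in L(H_1,k)\) and \(s\in L(H_2,x-k)\) with \(p=r+s\) (resp. \(p=(\min\{r^1,s^1\},\min\{r^2,s^2\})\)). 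The induction hypothesis applied to \(H_1\) and \(H_2\) produces \(r'\in L_{\varepsilon}(H_1,k)\) and \(s'\in L_{\varepsilon}(H_2,x-k)\) with \(r'\leqq(1+\varepsilon)r\) and \(s'\leqq(1+\varepsilon)s\), and by the Observation these arise from labels \(r''\in A_{\varepsilon}(H_1,k)\), \(s''\in A_{\varepsilon}(H_2,x-k)\) with \(r'=\VAL(H_1(\gamma^{r''}),u)\) and \(s'=\VAL(H_2(\gamma^{s''}),u)\). The label that \eqref{eq:6} (resp. \eqref{eq:7}) builds from \(\gamma^*\coloneqq\gamma^{r''}+\gamma^{s''}\) is either placed in \(L_{\varepsilon}(H,x)\) or dominated there by some \(y\leqq\VAL(H(\gamma^*),u)\), so it suffices to bound \(\VAL(H(\gamma^*),u)\). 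Since \(\gamma^{r''}\) and \(\gamma^{s''}\) act on the disjoint arc sets \(A_{H_1}\) and \(A_{H_2}\), and the maximum flow across a parallel composition is the sum, and across a series composition the minimum, of the maximum flows of the two parts, we obtain \(\VAL(H(\gamma^*),u)=r'+s'\) (resp. \((\min\{r^{'1},s^{'1}\},\min\{r^{'2},s^{'2}\})\)); monotonicity of \(+\) and of \(\odot\) with respect to \(\leqq\) then gives \(\VAL(H(\gamma^*),u)\leqq(1+\varepsilon)p\), completing the induction.

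I expect the only genuinely delicate point — and the place where the two-terminal series-parallel hypothesis is really used — to be the identity \(\VAL(H(\gamma^*),u)=r'+s'\) (resp. \((\min\{r^{'1},s^{'1}\},\min\{r^{'2},s^{'2}\})\)) for the strategy \(\gamma^*\) obtained by simply adding two strategies defined on disjoint arc sets; this mirrors exactly the flow decomposition already exploited in the proofs of Theorem~\ref{thm:correctness} and Theorem~\ref{lemma:correctness}. Everything else is bookkeeping, and indeed the corollary follows immediately once one notes that the induction in the proof of Theorem~\ref{lemma:correctness} was carried out over arbitrary \(H\) in \(T_G\) and arbitrary \(x\in\{0,\dots,B\}\). \qed
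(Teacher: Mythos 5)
Your proposal is correct and matches the paper exactly: the paper supplies no separate argument for Corollary~\ref{cor:approximation} precisely because, as you observe, the induction in the proof of Theorem~\ref{lemma:correctness} already establishes the \(\varepsilon\)-approximation property for every \(H\) in \(T_G\) and every \(x\in\{0,\ldots,B\}\), the pair \((G,B)\) being only the final instantiation. Your restated induction (leaf base case, parallel and series steps via Corollary~\ref{cor:dynproalg}, the Observation linking \(L_{\varepsilon}\) to \(A_{\varepsilon}\), and the two-case dichotomy on whether the combined label survives domination) follows the paper's argument step for step.
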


\begin{theorem}\label{thm:running}
	The time involved for computing \(L_{\varepsilon}(G,B)\) is in 
	\begin{equation*}
	\mathcal{O}(m^3\frac{1}{\varepsilon^2}\log^2(mU)\log(m\log(mU))+ \mathcal{T}m^3\frac{1}{\varepsilon^2}\log^2(mU)),
	\end{equation*}
	where \(\mathcal{T}\) denotes the time for solving a maximum flow problem on a two-terminal series-parallel graph.
\end{theorem}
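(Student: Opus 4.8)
The plan is to mirror, vertex by vertex, the bottom‑up traversal of the decomposition tree $T_G$ performed by the approximation scheme, and to charge the work in the same way as in the running‑time analysis of the exact dynamic program. Two preparatory facts drive everything. First, since $c\equiv 1$ we may assume $B<m$ (if $B\ge m$, interdicting all arcs yields the single non‑dominated point $(0,0)$ and the output is immediate), and $T_G$ has $m$ leaves and $m-1$ inner vertices. Second, I would bound the size of the auxiliary label sets: by construction every label in $A_\varepsilon(H,x)$ has first coordinate of the form $(1+\varepsilon)^i$ with $0\le i\le q^1$, so after discarding dominated labels at most one label survives per such value of $i$ (the one with the smallest second coordinate), hence $|A_\varepsilon(H,x)|\le q^1+2$, and symmetrically $|A_\varepsilon(H,x)|\le q^2+2$. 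Writing $q\coloneqq\max\{q^1,q^2\}$ and recalling $q^i\in\mathcal{O}(\frac1\varepsilon\log(mU^i))\subseteq\mathcal{O}(\frac1\varepsilon\log(mU))$, we get $|A_\varepsilon(H,x)|,|L_\varepsilon(H,x)|\in\mathcal{O}(q)=\mathcal{O}(\frac1\varepsilon\log(mU))$ for every $H$ in $T_G$ and every $x\in\{0,\dots,B\}$.

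The $m$ leaves contribute only lower‑order terms. For a primitive graph, \eqref{eq:5} and the corresponding $L_\varepsilon$‑entries are produced for all $x\in\{0,\dots,B\}$ in $\mathcal{O}(B+\log q)$ time (one binary search to locate the exponents $i,j$; all entries with $x\ge1$ are trivial), for a total of $\mathcal{O}(m(B+\log q))$, which is absorbed by the claimed bound.

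The dominant cost lies at the $m-1$ inner vertices. Fix an inner vertex $H$ obtained from $H_1$ and $H_2$ by a series or parallel composition. For a fixed budget $x$, evaluating \eqref{eq:6} or \eqref{eq:7} generates
\[
\sum_{k=0}^{x}|A_\varepsilon(H_1,k)|\cdot|A_\varepsilon(H_2,x-k)|\;\le\;(x+1)\,q^2
\]
labels before filtering; summing over $x=0,\dots,B$ and using $B<m$ gives $\mathcal{O}(m^2q^2)$ created labels at $H$, hence $\mathcal{O}(m^3q^2)$ created labels over the whole tree. Producing one label $r\oplus s$ (resp.\ $r\odot s$) requires evaluating the four maximum‑flow values $\VAL(H_1(\gamma^r),u^i)$, $\VAL(H_2(\gamma^s),u^i)$, $i=1,2$, occurring in its definition — a constant number of maximum‑flow computations, costing $\mathcal{O}(\mathcal{T})$ — plus $\mathcal{O}(\log q)$ time to extract the two truncation exponents by binary search. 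Filtering the $\mathcal{O}(m^3q^2)$ created points for dominance costs $\mathcal{O}(m^3q^2\log(m^3q^2))$ by the standard maxima‑of‑a‑point‑set bound, cf.~\cite{kung1975finding}; building and filtering the $L_\varepsilon$‑sets from the (already filtered) $A_\varepsilon$‑sets adds only $\mathcal{O}(\mathcal{T}\,m^2q+m^2q\log q)$, which is again absorbed. Altogether the algorithm runs in
\[
\mathcal{O}\!\left(\mathcal{T}\,m^3q^2+m^3q^2\log(m^3q^2)\right),
\]
and substituting $q^2\in\mathcal{O}(\frac1{\varepsilon^2}\log^2(mU))$ together with $\log(m^3q^2)\in\mathcal{O}(\log(m\log(mU)))$ (using, as usual for an FPTAS, that $\frac1\varepsilon$ is polynomially bounded in the input size) yields the stated running time.

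The main obstacle is the inner‑vertex bookkeeping. One has to (i) establish $|A_\varepsilon(H,x)|\in\mathcal{O}(q)$ cleanly from the dominance rule on the first coordinate, since this size bound is what propagates through all compositions and keeps the total label count at $\mathcal{O}(m^3q^2)$; and (ii) charge one maximum‑flow evaluation to each of the $\mathcal{O}(m^3q^2)$ \emph{created} labels rather than only to the $\mathcal{O}(m^3q)$ surviving ones — this conservative count is what produces the $\mathcal{T}$‑term in the statement. The remaining work is the nested summations over $x$ and $k$ and the application of the standard non‑dominance filtering bound.
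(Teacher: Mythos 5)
Your proposal is correct and follows essentially the same approach as the paper's proof: bound $|A_{\varepsilon}(H,x)|$ by $\mathcal{O}(\frac{1}{\varepsilon}\log(mU))$, use $B<m$ for unit costs, count the $\mathcal{O}(m^3\frac{1}{\varepsilon^2}\log^2(mU))$ created labels over the $m-1$ inner vertices, charge a constant number of maximum-flow computations to each created label, and apply the non-dominance filtering bound of \cite{kung1975finding}. You are in fact slightly more careful than the paper in justifying the label-set size bound via the dominance rule on the discretized coordinates and in noting that absorbing the $\frac{1}{\varepsilon}$ inside the final logarithm requires the usual assumption that $\frac{1}{\varepsilon}$ is polynomially bounded in the input size.
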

\begin{proof}
	First, note that for all \(x\in \{0,\ldots,B\}\) and for all \(H\) in \(T_G\) the size of the auxiliary label set \(A_{\varepsilon}(H,x)\) is bounded from above by \(\lceil\log_{1+\varepsilon}(mU)\rceil\), which is in \(\mathcal{O}(\frac{1}{\varepsilon}\log(mU))\). Further, the decomposition tree \(T_G\) has \(2m-1\) vertices containing \(m\) leaf vertices. The set of non-dominated points in \(A_{\varepsilon}(H,x)\) for a leaf vertex can be computed in constant time. Thus, in total \(\mathcal{O}(m)\) work is involved for all leaf vertices. For each of the remaining \(m-1\) vertices, we have to create at most \[\sum_{x=0}^{B}\sum_{k=0}^{x} |A_{\varepsilon}(H_1,k)| \cdot |A_{\varepsilon}(H_2,x-k)| \in \mathcal{O}(B^2\frac{1}{\varepsilon^2}\log^2(mU))= \mathcal{O}(m^2\frac{1}{\varepsilon^2}\log^2(mU))\] labels regardless of \(H\) being the series or a parallel composition of \(H_1\) and \(H_2\), respectively. Thus, the total number of labels created is in \(\mathcal{O}(m^3\frac{1}{\varepsilon^2}\log^2(mU))\). Two maximum flow problems have to be solved for each of those labels, which is in  \(\mathcal{O}(\mathcal{T}m^3\frac{1}{\varepsilon^2}\log^2(mU)))\). Moreover, we have to check these label sets for non-dominance, which can be done in \(\mathcal{O}(m^3\frac{1}{\varepsilon^2}\log^2(mU)\log(m^3\frac{1}{\varepsilon^2}\log^2(mU))) = \mathcal{O}(m^3\frac{1}{\varepsilon^2}\log^2(mU)\log(m\frac{1}{\varepsilon}\log(mU)))\), cf. \cite{kung1975finding}. Using the label set \(A_{\varepsilon}(G,B)\), we can compute \(L_{\varepsilon}(G,B)\) in \(\mathcal{O}(\mathcal{T}\frac{1}{\varepsilon}\log(mU))\), which is in \(\mathcal{O}(\mathcal{T}m^3\frac{1}{\varepsilon^2}\log^2(mU)))\) and can thus be neglected.  In total, we get a running-time complexity of \[\mathcal{O}(m^3\frac{1}{\varepsilon^2}\log^2(mU)\log(m\frac{1}{\varepsilon}\log(mU))+ \mathcal{T}m^3\frac{1}{\varepsilon^2}\log^2(mU)),\] which concludes the proof.\qed
\end{proof}

\begin{corollary}
	There is an FPTAS for \Problem\ on two-terminal series-parallel graphs with unit interdiction costs constructing an \(\varepsilon\)-approximation on the set of non-dominated points.
\end{corollary}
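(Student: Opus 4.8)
The plan is to obtain the corollary by assembling Theorem~\ref{lemma:correctness} (correctness of the scheme) with Theorem~\ref{thm:running} (its running time), after two small bookkeeping observations. First I would check that the object returned by the scheme is a set of \emph{feasible} interdiction strategies. By construction a label \(l=(l^1,l^2,\gamma^l)\in A_{\varepsilon}(H,x)\) that arises by combining \(r\in A_{\varepsilon}(H_1,k)\) with \(s\in A_{\varepsilon}(H_2,x-k)\) has \(\gamma^l=\gamma^r+\gamma^s\); since \(H_1\) and \(H_2\) are arc-disjoint and, inductively, \(\gamma^r\) interdicts at most \(k\) arcs of \(H_1\) and \(\gamma^s\) at most \(x-k\) arcs of \(H_2\), the strategy \(\gamma^l\) interdicts at most \(x\) arcs of \(H\). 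Evaluated at the root with \(x=B\) this gives \(\sum_{a\in A}c(a)\gamma^l_a=\sum_{a\in A}\gamma^l_a\le B\), so \(\gamma^l\in\Gamma\). Hence the algorithm outputs \(\mathcal{A}\coloneqq\{\gamma^l\mid l\in A_{\varepsilon}(G,B)\}\subseteq\Gamma\), whose image, after discarding dominated points, is exactly \(L_{\varepsilon}(G,B)\).

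Next I would upgrade the guarantee of Theorem~\ref{lemma:correctness} to the form required by the notion of an \(\varepsilon\)-approximation introduced in Section~\ref{sec:preliminaries}. Fix an arbitrary \(\gamma\in\Gamma\). Since \(\Gamma\) is finite there is an efficient \(\gamma'\in\Gamma_E\) with \(\VAL(G(\gamma'),u)\leqq\VAL(G(\gamma),u)\), and by Theorem~\ref{thm:correctness} the point \(p\coloneqq\VAL(G(\gamma'),u)\) lies in \(L(G,B)\). Theorem~\ref{lemma:correctness} then provides a label in \(L_{\varepsilon}(G,B)\), say with strategy \(a\in\mathcal{A}\), such that \(\VAL(G(a),u)\leqq(1+\varepsilon)p\leqq(1+\varepsilon)\VAL(G(\gamma),u)\); that is, \(a\) \(\varepsilon\)-approximates \(\gamma\). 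As \(\gamma\) was arbitrary, \(\mathcal{A}\) is an \(\varepsilon\)-approximation of \(Z_N\).

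Finally I would verify polynomiality of the running time in the encoding length \(\Theta(m\log U)\) of the instance (note \(B\le m\), so \(B\) adds nothing) and in \(\tfrac1\varepsilon\). This is immediate from Theorem~\ref{thm:running}: the bound \(\mathcal{O}\!\bigl(m^{3}\tfrac1{\varepsilon^{2}}\log^{2}(mU)\log(m\tfrac1\varepsilon\log(mU))+\mathcal{T}\,m^{3}\tfrac1{\varepsilon^{2}}\log^{2}(mU)\bigr)\) is polynomial in \(m\), in \(\log U\) and in \(\tfrac1\varepsilon\), because \(\mathcal{T}\) — the time to solve one maximum flow problem on a two-terminal series-parallel graph — is itself polynomial in \(m\) and \(\log U\). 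Combining the three parts yields that the scheme is an FPTAS for \Problem\ on two-terminal series-parallel graphs with unit interdiction costs. I do not expect a genuine obstacle here: the substantive work was already carried out in Theorems~\ref{lemma:correctness} and~\ref{thm:running}, and the only point that needs care is that the running time depends on \(\log U\) rather than on \(U\), which is precisely what the geometric partition of the objective space into the \(q^i\) intervals buys us.
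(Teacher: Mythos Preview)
Your proposal is correct and follows the same approach as the paper: the paper's proof is the single line ``Follows from Lemma~\ref{lemma:correctness} and Lemma~\ref{thm:running}.'' Your version is more detailed---explicitly checking feasibility of the returned strategies, extending the approximation guarantee from non-dominated points to all \(\gamma\in\Gamma\), and spelling out why the bound in Theorem~\ref{thm:running} is polynomial in the encoding length and in \(1/\varepsilon\)---but these are exactly the bookkeeping steps implicit in the paper's one-line citation, so the route is the same.
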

\begin{proof}
	Follows from Lemma \ref{lemma:correctness} and Lemma \ref{thm:running}.\qed
\end{proof}

\section{Conclusion}\label{sec:conclusion}
We introduced the biobjective maximum flow network interdiction problem, called \Problem. We showed that \Problem\ is intractable and that the decision problem, which asks whether a feasible interdiction strategy is efficient, is \(\mathcal{NP}\)-complete, even on graphs with only two vertices and several parallel arcs connecting those. We showed that \Problem\ on those graphs can be formulated as a biobjective knapsack problem and can thus be approximated within arbitrary precision. Lastly, we proposed a dynamic programming algorithm to solve \Problem\ on two-terminal series-parallel graphs, which we extended in case of unit interdiction costs to a fully polynomial-time approximation scheme and thus, addressing the gap of approximation schemes in network interdiction problems. In the future, it would be interesting to investigate whether \Problem\ can be approximated on two-terminal series-parallel graphs with arbitrary interdiction costs, or even on general directed graphs.

\begin{acknowledgements}
This work was partially supported by the Bundesministerium f\"ur Bildung und Forschung (BMBF) under Grant No. 13N14561 and Deutscher Akademischer Austauschdienst (DAAD) under Grant No. 57518713. Carlos M. Fonseca acknowledges national funding through the FCT - Foundation for Science and Technology, I.P., in the scope of bilateral cooperation project ``Multiobjective Network Interdiction'' and CISUC - UID/CEC/00326/2020, and the European Social Fund, through the Regional Operational Program Centro 2020.
\end{acknowledgements}

%
%



\end{document}